\documentclass{article}
\usepackage[english]{babel}
\usepackage[utf8]{inputenc}
\usepackage{amsmath}
\usepackage{graphicx}
\usepackage{amsfonts}
\usepackage{enumitem}
\usepackage{amssymb}
\usepackage[colorinlistoftodos]{todonotes}
\usepackage{geometry}
\usepackage{amsthm}
\usepackage{comment}
\usepackage{hyperref}
\usepackage{tikz}
\usetikzlibrary{math, calc}

\newcommand{\abs}[1]{\left\lvert#1\right\rvert}

\newcommand{\floor}[1]{\left\lfloor #1 \right\rfloor}
\newcommand{\ceil}[1]{\left\lceil #1 \right\rceil}
\newcommand{\paren}[1]{\left( #1 \right)}

\newcommand{\set}[1]{\left\{ #1 \right\}}
\newcommand{\setcond}[2]{\left\{ #1 \;\middle\vert\; #2 \right\}}

\newcommand{\EE}{\mathbb{E}}

\newcommand{\RR}{\mathbb{R}}
\newcommand{\PP}{\mathbb{P}}
\newcommand{\NN}{\mathbb{N}}
\newcommand{\ZZ}{\mathbb{Z}}

\newcommand{\cA}{\mathcal{A}}
\newcommand{\cB}{\mathcal{B}}
\newcommand{\cC}{\mathcal{C}}
\newcommand{\cD}{\mathcal{D}}
\newcommand{\cE}{\mathcal{E}}

\newcommand{\cP}{\mathcal{P}}

\newcommand{\ones}{\mathbf{1}}

\newtheorem{thm}{Theorem}[section]
\newtheorem{lem}[thm]{Lemma}
\newtheorem{cor}[thm]{Corollary}
\newtheorem{conj}[thm]{Conjecture}
\newtheorem{claim}[thm]{Claim}

\newtheorem{prop}[thm]{Proposition}
\newtheorem{prob}[thm]{Problem}
\newtheorem{fact}[thm]{Fact}
\newtheorem*{rmk}{Remark}

\theoremstyle{definition}
\newtheorem{defn}[thm]{Definition}

\DeclareMathOperator{\Vol}{Vol}
\DeclareMathOperator{\Var}{Var}

\title{On the largest sum-free subset of the lattice cube}
\author{Peter Keevash\thanks{Mathematical Institute, University of Oxford, UK. Email: {\tt keevash@maths.ox.ac.uk}. Research supported by ERC Advanced Grant 883810.} \and Jeck Lim\thanks{Mathematical Institute, University of Oxford, UK. Email: {\tt jeck.lim@maths.ox.ac.uk}. Research supported by ERC Advanced Grant 883810.}}
\date{}

\begin{document}
\maketitle

\begin{abstract}
We determine the limiting density of the largest sum-free subset of the lattice cube $\{1,2,\ldots,n\}^d$ for all $d$, thus resolving the natural conjecture that it is constructed by two appropriate hyperplane slices. Equivalently, we show that the largest measure of a sum-free subset of the hypercube $(0,1)^d\subset \mathbb{R}^d$ is attained by $\setcond{x\in (0,1)^d}{1\leq L(x)<2}$ for some linear map $L:\mathbb{R}^d\to \mathbb{R}$. It is natural to conjecture that the same phenomenon might hold if one replaces the hypercube by any convex set not containing the origin, but we give an example to show that for sufficiently large $d$ this is not the case.
\end{abstract}

\section{Introduction}

A subset $S$ of an abelian group is called \emph{sum-free} if there are no $x,y,z\in S$ such that $x+y=z$. The aim of this paper is to determine the density of the largest sum-free subset of the lattice cube $\{1,2,\ldots,n\}^d$. For notational convenience, we denote by $[n]$ the set $\{0,1,\ldots,n-1\}$, and we will be studying sum-free subsets of $[n+1]^d$. If $S\subset [n+1]^d$ is the largest sum-free subset, then we would like to determine the value
\[c_d := \lim_{n\to\infty} \frac{|S|}{n^d}.\]
For $d=1$, it is folklore that $c_1=1/2$. The problem of determining $c_d$ for $d\geq 2$ was posed by Aydinian~\cite[Problem 10]{C05} (in the case $d=2$) and Cameron~\cite{C02}, and is also featured in a list of 100 open problems by Green~\cite[Problem~6]{G100}. The case $d=2$ was resolved by Elsholtz and Rackham~\cite{ER17}, with $c_2=3/5$. Recently, Lepsveridze and Sun~\cite{LS23} resolved the cases $d=3$ and $d=4$.

In general, it is believed that the slice
\[\setcond{x\in [n+1]^d}{u\leq \sum_i x_i<2u}\]
is optimal, where $u$ is chosen to maximize the size of the slice. To quantify $u$, it is convenient to consider the continuous version in the limit $n\to\infty$.

Let $\ones_d$ denote the all ones vector in $\RR^d$. If the context is clear, we just write $\ones$. For $u\in [0,d]$, let $S_d(u) := \setcond{x\in [0,1]^d}{u \leq \sum_i x_i < 2u}$. Let $u_d^*$ be the value of $u$ that maximizes $\Vol(S_d(u))$, and let $c_d^*$ be the maximum volume $\Vol(S_d(u_d^*))$. 

Note that the set $\setcond{x\in [n+1]^d}{u_d^*n \leq \sum_i x_i<2u_d^*n}$ is sum-free and has asymptotic density $c_d^*$, thus we have the easy bound $c_d\geq c_d^*$. The conjecture is that $c_d=c_d^*$ for all $d$, which was known up to $d\leq 4$. We prove this conjecture for all $d$.

\begin{thm} \label{thm:main}
    Let $d\geq 1$ and $S\subseteq [n+1]^d$ be sum-free. Then $|S|\leq c_d^* n^d + O(n^{d-1})$.
\end{thm}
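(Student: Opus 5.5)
We plan to pass to the continuous problem and prove that any measurable sum-free set $A\subseteq(0,1)^d$ (no $x,y,z\in A$ with $x+y=z$) satisfies $\Vol(A)\le c_d^*$. The discrete statement then follows by a standard thickening: replace each point $x\in S$ by the cube $\frac1n(x+[0,1)^d)$. The resulting set is not exactly sum-free, but it is sum-free in an approximate sense that loses only $O(n^{d-1})$ points. Concretely, first intersect $S$ with a parity class. Since $S$ is sum-free, the sets $S+S$ and $S$ are disjoint. Using $x+y+[0,2)^d\supseteq x+y+[0,1)^d$ together with Kneser/Brunn--Minkowski type estimates, one checks that the continuous inequality we prove has the robust form
\[\Vol(A)+\Vol((A+A)\cap(0,1)^d)\le \text{const}+\dots,\]
and this robust form transfers to the lattice with $O(n^{d-1})$ boundary error.

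The core step is therefore a sumset inequality. For measurable $A\subseteq(0,1)^d$, we want to show that $A\cap(A+A)=\emptyset$ forces $\Vol(A)\le \max_L\Vol\{x:1\le L(x)<2\}$, where $L$ ranges over linear functionals. By symmetry of the cube and a rearrangement argument, the extremal $L$ will turn out to be a multiple of $\sum_i x_i$. The first move is to reduce to sets that are compressed (down-sets or up-sets) in each coordinate direction. Take a one-dimensional compression along $e_i$ that pushes each fibre toward the top of the segment. In one dimension, if $B\cap(B+B)=\emptyset$ then pushing $B$ to an interval $[t,1)$ with $|B|=1-t$ keeps it sum-free. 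This holds because the $1$-dimensional Cauchy--Davenport/Macbeath inequality gives $|B+B|\ge 2|B|$ for sets in $(0,1)$, and more precisely $\min(B+B)\ge 2\min B$. Fibrewise, one needs the statement $|(B_1+B_2)\cap(0,1)|\ge \min(1,|B_1|+|B_2|)$ minus corrections. I expect the compression not to preserve sum-freeness exactly. Instead I will track the functional $\Vol(A)$ subject to $\Vol(A)+\Vol((A+A)\cap(0,1)^d)\le 1$, which compressions do preserve, by Brunn--Minkowski for fibres. After compressing in all coordinates, $A$ becomes an up-set in $(0,1)^d$.

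For an up-set $A$, $A+A$ contains the cone $\{z: z\ge a+b\}$, and $\min$-structure makes the problem tractable. Let $D=(0,1)^d\setminus A$, which is a down-set containing a neighbourhood of $0$. The constraint $A+A\subseteq D\cup\{\text{outside}\}$ then combines with the down-set property, and the remaining task is to show that the optimum has a linear boundary. Here I would use a separating-hyperplane argument. Let $K$ be the convex hull of $D$'s complement's ``lower boundary''. Convexity enters through Brunn--Minkowski: $\Vol(\tfrac12(A+A))\ge\Vol(A)$, and $A+A\supseteq 2\,\mathrm{conv}$-type regions for up-sets. One then argues that replacing $A$ by $\{x:L(x)\ge1\}$ with $L$ a supporting functional of $\mathrm{conv}(A)$ at a suitable point does not decrease the objective. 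Finally, optimize over $L$ with a one-variable calculus/convexity argument (log-concavity of slice volumes of the cube) to show that the symmetric $L=\sum x_i/u$ is best.

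The main obstacle will be this last reduction from a general up-set to a half-space slice. Up-sets need not be convex, and $A+A$ for a nonconvex up-set can be much smaller than for its convex hull. I would first try a Brunn--Minkowski type inequality restricted to the cube, of the form
\[\Vol((A+A)\cap(0,1)^d)\ge \Vol(\{x\in(0,1)^d: x\in 2A'\})\]
with $A'$ the half-space through an appropriate tangent. If that fails, I would use a hyperplane-by-hyperplane induction on $d$, slicing by $x_d=t$ and applying the $(d-1)$-dimensional result to fibres with a weighted (Prékopa--Leindler) combination.
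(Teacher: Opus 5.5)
\textbf{There is a genuine gap: the compression-to-up-sets route cannot reach $c_d^*$.} It fails in two complementary ways.

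First, the constraint you plan to track, $\Vol(A)+\Vol((A+A)\cap(0,1)^d)\le 1$, is much weaker than sum-freeness. For $d=1$, the set $A=(1/3,1)$ has $(A+A)\cap(0,1)=(2/3,1)$. It satisfies your constraint with equality, yet $\Vol(A)=2/3>1/2=c_1^*$. For $d=2$, the set $A=\{x\in(0,1)^2: x_1+x_2>2/3\}$ has $(A+A)\cap(0,1)^2=\{z\in(0,1)^2: z_1+z_2>4/3\}$, of area $2/9$. Again equality holds, while $\Vol(A)=7/9>3/5=c_2^*$. So no argument using only this functional can prove the theorem.

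Second, genuine sum-freeness cannot be carried to up-sets. If $A\subseteq(0,1)^d$ is a sum-free up-set, then $a+b\notin(0,1)^d$ for all $a,b\in A$; otherwise $a+b\ge a$ would put $a+b$ in $A$. Hence $A$ is essentially disjoint from $\ones-A$, and $\Vol(A)\le 1/2$. The extremal sets $S_d(u_d^*)=\{u_d^*\le\sum_i x_i<2u_d^*\}$ have volume $c_d^*>1/2$ for $d\ge 2$ (already $c_2^*=3/5$). They are two-sided slabs, and it is exactly their upper face $\sum_i x_i<2u_d^*$ that top-compression destroys. Your intended endpoint, a half-space $\{x: L(x)\ge 1\}$ cut out by a supporting functional of an up-set, is therefore the wrong shape: being a sum-free up-set, it has volume at most $1/2$. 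Separately, the claim that $L\propto\ones$ is the best direction among all linear maps is asserted ``by symmetry and rearrangement'' with no argument; it is not a routine step.

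\textbf{The discretization step is also unsupported.} Thickening $S$ by the cells $\frac1n(x+[0,1)^d)$ does not give a sum-free set: a sum lands in $\frac1n(x+y+[0,2)^d)$, which meets $2^d$ cells. Shrinking the cells to restore sum-freeness loses a factor $2^d$. An $O(n^{d-1})$ error term would need a quantitative stability version of the continuous bound, which you never formulate.

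\textbf{The missing idea} is the one the paper uses, working directly on the lattice with no compression or continuous limit.
\begin{itemize}
\item By Lemma~\ref{lem:fiber-bound}, the fibres of a sum-free $S$ satisfy $|S(x)|+|S(y)|+|S(z)|\le 2(n+1)$ whenever $x+y=z$ in $[n+1]^{d-1}$, so $|S|$ is bounded by a linear program.
\item A feasible dual solution has weights on triples in $(\cB\times\cB\times\cD)\cup(\cA\times\cC\times\cC)\cup(\cC\times\cC\times\cE)$. It is built from joint mixability of uniform distributions on integer slices (Theorem~\ref{thm:joint-mix}), proved by induction on $d$ via a Brouwer fixed point and uniqueness of the stationary distribution of an ergodic Markov chain.
\item The log-concavity slice estimates of Lemma~\ref{lem:real-layer-estimate} ensure the total weight on $\cC$ is at most $1$.
\item Comparing termwise with $S^*$, for which these fibre inequalities are essentially tight, gives $|S|\le c_d^*n^d+O(n^{d-1})$.
\end{itemize}
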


We make some remarks on the error term. For $d=1$, it is folklore that the maximum possible size of $S$ is $\ceil{n/2}$. For $d=2$, \cite{ER17} gives an error term of $O(n)$. For $d=3,4$, while \cite{LS23} obtained the optimal leading constant, their error term is only $O(n^{d-1/2})$. Hence, we need to reprove their result for $d=3,4$ to obtain our desired error term of $O(n^{d-1})$.

Our proof follows the same approach as Lepsveridze and Sun~\cite{LS23}. Their idea begins with a simple but key inequality relating the sizes of one-dimensional fibers of $S$, then rewrites the problem of bounding $|S|$ as a linear program. It turns out that this linear program is sufficient to obtain the optimal bound for $|S|$. To solve this linear program (or rather, to obtain a good upper bound), it is enough to construct feasible weights for the dual linear program. The construction of such weights essentially amounts to showing that the uniform distributions on certain slices of the cube are jointly mixable.

\begin{defn}
Let $\mu_1,\mu_2,\mu_3$ be distributions on $\RR^d$. We say that $(\mu_1,\mu_2,\mu_3)$ is \emph{jointly mixable} if there exists a coupling $(X_1,X_2,X_3)$ with marginals $X_i\sim \mu_i$ for all $i$ such that $X_1+X_2+X_3=\EE[\mu_1]+\EE[\mu_2]+\EE[\mu_3]$ almost surely.
\end{defn}

\begin{defn}
    For $r\in \ZZ$, let $L_{d}(r) := \setcond{x\in [n+1]^d}{\sum_i x_i = r}$. For $r\in \RR$, let $L_d^\RR(r) := \setcond{x\in [0,1]^d}{\sum_i x_i = r}$. For any set $L$, finite or of non-zero finite measure, denote by $U(L)$ the uniform distribution on $L$.
\end{defn}

Lepsveridze and Sun reduced Theorem~\ref{thm:main} to the following joint mixability result (and another weaker but more complicated coupling which we ultimately avoid).
\begin{conj}
    For $d\geq 2$, the triple
    \[U(L_d^\RR(u_d^*)),\ U(L_d^\RR(u_d^*)),\ U(L_d^\RR(d-2u_d^*))\]
    is jointly mixable.
\end{conj}
They are able to explicitly construct such a coupling for $d=3,4$, then obtain a discretised version of the coupling.

The main contribution of this paper is the following general joint mixability result in the discrete setting, which implies the above conjecture by taking suitable limits.

\begin{thm} \label{thm:joint-mix}
    Let $d\geq 1$ and $r,s,t$ be nonnegative integers with $r+s+t=dn$. Then the triple 
    \[U(L_{d}(r)),\ U(L_{d}(s)),\ U(L_{d}(t))\]
    is jointly mixable.
\end{thm}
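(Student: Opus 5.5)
Writing $X\sim U(L_d(r))$, $Y\sim U(L_d(s))$, $Z\sim U(L_d(t))$, coordinate symmetry gives $\EE[X]=\frac rd\ones$, and similarly for $Y,Z$, so the target is $X+Y+Z=n\ones$ almost surely. The proof I have in mind is an induction on $d$ that peels off one coordinate at a time. This reduces the statement to a one-dimensional three-marginal mixing problem on $\{0,\dots,n\}$.

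\textbf{The reduction.} For $d=1$ each $L_1(r)$ is a single point, and $r+s+t=n$ gives the trivial coupling. For $d\ge2$, let $a_r$ be the law of the first coordinate of $U(L_d(r))$. Then
\[a_r(i)=\frac{N_{d-1}(r-i)}{N_d(r)}, \qquad N_m(q):=\abs{L_m(q)},\]
and, conditionally on $x_1=i$, the remaining coordinates are uniform on $L_{d-1}(r-i)$. Suppose we have a coupling $(I,J,K)$ with $I\sim a_r$, $J\sim a_s$, $K\sim a_t$ and $I+J+K=n$ almost surely. Then, given $(I,J,K)=(i,j,k)$,
\[(r-i)+(s-j)+(t-k)=(d-1)n,\]
so the induction hypothesis supplies a coupling of $U(L_{d-1}(r-i))$, $U(L_{d-1}(s-j))$, $U(L_{d-1}(t-k))$ whose sum is $n\ones_{d-1}$. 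Gluing these conditional couplings over the law of $(I,J,K)$ gives a coupling with the correct marginals, because each marginal is the mixture of its conditional laws with the correct first-coordinate law. The resulting sum is $n\ones_d$. Hence everything reduces to the following lemma.

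\textbf{Key lemma.} If $r+s+t=dn$, then $(a_r,a_s,a_t)$ is jointly mixable on $\{0,\dots,n\}$ with sum exactly $n$. The means are consistent, since $\EE[a_r]=r/d$. The structural input is that $i\mapsto N_{d-1}(r-i)$ is log-concave, as the number of points on a layer of a box is an iterated convolution of indicator sequences of intervals. Thus each $a_q$ is a log-concave, hence unimodal, distribution on an interval of integers. For $d-1\ge1$ it is supported on all of
\[\bigl[\max(0,q-(d-1)n),\ \min(n,q)\bigr].\]
I would prove the lemma via LP duality (a finite transportation-type LP). A feasible mixing exists if and only if, for all $f,g,h:\{0,\dots,n\}\to\RR$ with $f(i)+g(j)+h(k)\le 0$ whenever $i+j+k=n$,
\[\EE_{a_r}f+\EE_{a_s}g+\EE_{a_t}h\le 0.\]
Alternatively one can give a direct greedy construction. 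The mixing condition I expect to need is a discrete analogue of the Wang--Wang criterion for unimodal or monotone densities: each variable's range is dominated by the slack left by the other two means. I would check that such a condition is satisfied for the specific laws $a_q$ using their explicit form.

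\textbf{Main obstacle.} The hard step is the one-dimensional lemma in the discrete setting. Continuous mixability results rely on densities and do not transfer directly because of integrality effects near the endpoints $0$ and $n$. My first attempt would be a constructive argument: pair the atoms of $a_r$ and $a_s$ by a monotone, antithetic-style coupling, then show that the induced law of $n-I-J$ can be rearranged to match $a_t$. The tool for this rearrangement would be an exchange argument, or a discrete majorization inequality derived from log-concavity of $N_{d-1}$. If that fails, I would instead strengthen the induction hypothesis so that the one-dimensional problem need only be solved for particularly convenient triples $(r,s,t)$. One example is peeling off coordinates in a chosen order, so that one of $r,s,t$ is always extreme and its first-coordinate law is monotone, where discrete mixability is easier to establish.
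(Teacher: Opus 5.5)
\textbf{Verdict: genuine gap.} Your reduction is correct and matches the paper's final gluing step. But the one-dimensional key lemma it reduces to carries the whole difficulty, and the proposal never proves it.

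\textbf{The reduction is fine.} Once the first-coordinate laws admit a coupling $(I,J,K)$ with $I+J+K=n$, the induction hypothesis applied to $(r-i,s-j,t-k)$ completes the proof, exactly as in the paper.

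\textbf{None of the suggested routes proves the key lemma.} LP duality only restates it. The mean/range condition you have in mind is necessary but not sufficient once densities are not monotone. For instance, two laws sharply peaked in the middle of $[0,1]$ cannot be mixed with the uniform law on $[0,3/2]$, although the mean condition holds strictly. The Wang--Wang sufficiency results cover monotone densities and symmetric unimodal densities of a common shape. The laws $a_q(i)\propto N_{d-1}(q-i)$ are log-concave but in general neither (for $d=3$ they are truncated tents). Even the $d=2$ case of your lemma, where the $a_q$ are uniform on integer intervals, is a nontrivial published result (Lemma~\ref{lem:1d-joint-mix}); the paper imports it as its base case. The antithetic pairing only postpones the problem: you would still need the law of $n-K$ to be the law of $I+J$ under \emph{some} coupling of $a_r$ and $a_s$, which is the original question. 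Changing the order in which coordinates are peeled changes nothing, because every coordinate of $U(L_d(r))$ has law $a_r$ and the residual sums $(r-i,s-j,t-k)$ are dictated by the coupling.

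\textbf{The missing idea.} For $d\ge3$ the one-dimensional coupling can be extracted from the induction hypothesis itself, with no analysis of the shape of $a_q$.
\begin{enumerate}
\item Let $A$ be the set of $(i,j,k)$ with $i+j+k=n$ and $0\le r-i,\,s-j,\,t-k\le(d-1)n$. For $p\in A$, let $\nu_p$ be the law of the first-coordinate triple of a $(d-1)$-dimensional mixing coupling for $(r-i,s-j,t-k)$.
\item Each $\nu_p$ is again supported on $A$. So $\kappa\mapsto\sum_p\kappa(p)\nu_p$ is a linear self-map of the simplex $\cP(A)$ and has a fixed point $\kappa$.
\item The first marginal $q^*$ of $\kappa$ satisfies $q^*(u)=\sum_i q^*(i)\,q_{d-1,r-i}(u)$, where $q_{d-1,r-i}$ is the first-coordinate law of $U(L_{d-1}(r-i))$. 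Thus $q^*$ is stationary for the Markov chain $M(i,u)=q_{d-1,r-i}(u)$.
\item Your $a_r$ (the paper's $q_{d,r}$) is stationary for the same chain by exchangeability. For $x\sim U(L_d(r))$, the law of $x_2$ given $x_1=i$ is $q_{d-1,r-i}$, and $x_2$ has the same law as $x_1$.
\item For $d\ge3$ the chain is irreducible and aperiodic, so $q^*=a_r$; likewise for $s$ and $t$. Hence $\kappa$ is exactly the coupling your key lemma asks for.
\end{enumerate}
For $d=2$ the chain is the involution $i\mapsto r-i$ and has many stationary laws. That is why that case must be handled separately by the interval-mixability criterion.
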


We will also briefly discuss the problem of determining the size of the largest sum-free subset of convex sets besides the hypercube $[0,1]^d$.

\subsection{Acknowledgements}
ChatGPT-5.4 provided the main ideas used in the proof of Theorem~\ref{thm:joint-mix}, and helped generate the code for numerically verifying Lemma~\ref{lem:real-layer-estimate} for small $d$.

\section{Joint mixability}

We will prove Theorem~\ref{thm:joint-mix} by induction on $d$. There is nothing to prove for $d=1$, since the sets $L_{1}(r),L_{1}(s),L_{1}(t)$ are singletons. We first prove the base case $d=2$, which is equivalent to the following result by Perchet, Rigollet, and Le Gouic~\cite{PRL24}. The continuous version was also proved by Wang and Wang~\cite{WW16}.

\begin{lem}[{\cite[Proposition~15]{PRL24}}] \label{lem:1d-joint-mix}
    Let $r,s,t$ be nonnegative integers. Then $(U([r+1]),U([s+1]),U([t+1]))$ is jointly mixable if and only if $r+s+t$ is even and $r,s,t$ satisfy the triangle inequality, i.e. $r+s \geq t$, $s+t \geq r$, and $t+r \geq s$.
\end{lem}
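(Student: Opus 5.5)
Necessity is short; sufficiency I would prove by an explicit decomposition of the target sum set into lattice "triangles", with induction on $r+s+t$.

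\emph{Necessity.} Suppose $(X_1,X_2,X_3)$ is a coupling with $X_1\sim U([r+1])$, $X_2\sim U([s+1])$, $X_3\sim U([t+1])$ and $X_1+X_2+X_3=(r+s+t)/2$ almost surely. The $X_i$ are integers, so $(r+s+t)/2$ must be an integer, i.e.\ $r+s+t$ is even. If, say, $t>r+s$, then on the event $X_3=t$ we get $X_1+X_2=(r+s-t)/2<0$. This is impossible, since $X_1,X_2\geq 0$ and $\PP(X_3=t)>0$. The other two triangle inequalities follow by symmetry.

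\emph{Sufficiency.} Write $m=(r+s+t)/2$. I want a probability measure on triples $(a,b,c)\in[r+1]\times[s+1]\times[t+1]$ with $a+b+c=m$ whose three marginals are uniform; equivalently, nonnegative weights on the lattice points of the planar polygon $\{(a,b): 0\le a\le r,\ 0\le b\le s,\ 0\le m-a-b\le t\}$ with uniform projections in the three directions.

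The basic building block is the equal case $r=s=t$ (with $r$ even so that $m$ is an integer). Here I would use the cyclic coupling: take $X_1$ uniform and set $X_2=X_1+r/2 \bmod (r+1)$ in the appropriate sense, with $X_3$ forced to be $m-X_1-X_2$, and check it stays in range. The classical explicit pattern for the case $r=s=t$ pairs $a$ with $b=r/2+a$ for $a\le r/2$ and with $b=a-r/2-1$ for $a>r/2$. I would adjust this to balance $c$, or else cite the known Latin-square-type construction.

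\emph{Reduction step.} I would peel off structure by induction on $r+s+t$. Assume WLOG $r\ge s\ge t$. If $r=s+t$ (the degenerate triangle), the coupling $X_1=X_2+X_3$ (after reflecting $X_1\mapsto r-X_1$) with $X_2,X_3$ uniform does not give a uniform $X_1$. The fix is to use the mixture of uniform-on-$[s+1]$ and uniform-on-$[t+1]$ blocks: cover $[r+1]$ as $[s+1]\cup(s+1+[t])$, approximately.

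The more robust approach I would actually pursue is \emph{splitting}. Write $U([r+1])$ as a mixture of $U([r_1+1])$ and a shifted $U([r_2+1])$, with $r_1+r_2+1=r$ and weights $(r_1+1)/(r+1)$ and $(r_2+1)/(r+1)$, and do the same for $s$ and $t$ with the same weights. Then apply induction to the two sub-triples, which must each satisfy parity and the triangle inequality. Matching weights forces proportional splits, which only works when the sizes share common factors. This is the main obstacle.

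\emph{Alternative.} If splitting fails, I would fall back on a direct LP/flow argument. Joint mixability is the existence of a nonnegative solution to a transportation-type system. I would verify feasibility via Farkas' lemma: for every triple of functions $f,g,h$ with $f(a)+g(b)+h(c)\ge 0$ whenever $a+b+c=m$, show that $\sum f/(r+1)+\sum g/(s+1)+\sum h/(t+1)\ge 0$. Verifying this dual inequality under the triangle condition, likely by averaging over a family of explicit cyclic matchings, is where I expect the real work to be. Since the statement is cited from \cite{PRL24}, I would ultimately lean on their construction.
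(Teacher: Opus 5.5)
\textbf{The gap is sufficiency.} Your necessity argument is correct. But the converse is the only direction the paper actually uses, in the $d=2$ base case of Theorem~\ref{thm:joint-mix}, and nothing in your proposal establishes it.

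The one case you genuinely handle is $r=s=t$. There your cyclic pattern already works as stated: $c=m-a-b$ equals $r-2a$ for $a\le r/2$ and $2r+1-2a$ for $a>r/2$, so $c$ runs through every value of $[r+1]$ exactly once and no adjustment is needed.

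The degenerate case $r=s+t$ you leave \emph{approximate}, though it can be done exactly. Put a marker at a uniform position in a uniformly random word with $s$ letters of one kind and $t$ of the other. Let $X_2,X_3$ count the letters of each kind before the marker, and set $X_1=r-X_2-X_3$. Then each of $X_1,X_2,X_3$ is uniform on its range.

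The generic case is never reached:
\begin{itemize}
\item Your two-block splitting cannot even start in general. Take $(r,s,t)=(2,3,3)$, which satisfies the hypotheses and is mixable. The available weights are $\{1/3,2/3\}$ for the first coordinate and $\{1/4,1/2,3/4\}$ for the other two, with no common value.
\item Even when a common split exists, the two shifted sub-couplings must also have the same sum constant $m$. Parity and the triangle inequality for each sub-triple are not enough, and you don't address this.
\item The Farkas route is the primal problem in disguise. Certifying the dual inequality by averaging over a family of explicit matchings is exactly constructing the coupling.
\end{itemize}

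So what you actually have is necessity plus a citation for sufficiency. That is also all the paper does: it gives no proof of this lemma and imports it from \cite{PRL24}, mentioning the continuous analogue from \cite{WW16}. Relying on the citation is therefore legitimate here. However, the sketched routes should not be presented as a proof of the \emph{if} direction.
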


\begin{proof}[Proof of Theorem~\ref{thm:joint-mix} for $d=2$]
    By projecting onto the first coordinate, the joint mixability of $(U(L_{d}(r)),U(L_{d}(s)),U(L_{d}(t)))$ is equivalent to the joint mixability of intervals
    \[[\max(0,r-n), \min(n,r)], [\max(0,s-n), \min(n,s)], [\max(0,t-n), \min(n,t)].\]
    By translating the intervals so that they start at zero and applying Lemma~\ref{lem:1d-joint-mix}, we see that the joint mixability of the above intervals is equivalent to the numbers
    \[\min(n,r)-\max(0,r-n), \min(n,s)-\max(0,s-n), \min(n,t)-\max(0,t-n)\]
    satisfying the triangle inequality and having even sum. The even sum is easy to verify since $\min(n,r)-\max(0,r-n)\equiv r\pmod{2}$ and $r+s+t=2n$, so it suffices to check the triangle inequality. By symmetry, it suffices to show that
    \begin{equation} \label{eq:rst-triangle}
        \min(n,r)-\max(0,r-n)+\min(n,s)-\max(0,s-n)\geq \min(n,t)-\max(0,t-n).
    \end{equation}
    Recall that $r+s+t=2n$. If $r,s\leq n$, then \eqref{eq:rst-triangle} is equivalent to $r+s\geq \min(n,2n-r-s)-\max(0,n-r-s)$, which is true since $r+s=n-(n-r-s)\geq \min(n,2n-r-s)-\max(0,n-r-s)$. If one of $r,s$ is greater than $n$, say $r>n$, then since $r+s+t=2n$, we have $s,t<n$. Hence, \eqref{eq:rst-triangle} is equivalent to $2n-r+s\geq t=2n-r-s$, which is true.
\end{proof}

We now assume $d\geq 3$. To find a coupling $(X,Y,Z)$ on $L_{d}(r)\times L_{d}(s)\times L_{d}(t)$ with $X+Y+Z=n\ones$, we will find a suitable coupling $(X_1,Y_1,Z_1)$ on the first coordinates, then use the induction hypothesis to couple the remaining coordinates. In order for this to yield a uniform marginal on $L_{d}(r)$, the law of $X_1$ should be the law of the first coordinate of $U(L_{d}(r))$. The coupling $(X_1,Y_1,Z_1)$ is obtained by setting up certain linear equations, then applying Brouwer's fixed-point theorem (or just the Perron--Frobenius theorem) to find a solution.

For $p=(i,j,k)\in [n+1]^3$, let 
\[r_p := r-i,\quad s_p := s-j,\quad t_p := t-k.\]
The support of the coupling on the first coordinates is
\[A_{d}(r,s,t) := \setcond{p=(i,j,k)\in [n+1]^3}{i+j+k=n,\ 0\leq r_p,s_p,t_p\leq (d-1)n}.\]
It is straightforward to check that $A_{d}(r,s,t)$ is non-empty. By our induction hypothesis, for each $p\in A_{d}(r,s,t)$, there exists a coupling $(\tilde{X},\tilde{Y},\tilde{Z})$ on $L_{d-1}(r_p)\times L_{d-1}(s_p)\times L_{d-1}(t_p)$ with $\tilde{X}+\tilde{Y}+\tilde{Z}=n\ones_{d-1}$. Take one such coupling, and let $\nu_p$ be the law of the first-coordinate triple $(\tilde{X}_1,\tilde{Y}_1,\tilde{Z}_1)$. 
\begin{lem} \label{lem:nu-support}
    For $p\in A_{d}(r,s,t)$, the measure $\nu_p$ is supported on $A_{d}(r,s,t)$.
\end{lem}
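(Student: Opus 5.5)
We have to show: if $p=(i,j,k)\in A_d(r,s,t)$ and $(\tilde X,\tilde Y,\tilde Z)$ is a coupling on $L_{d-1}(r_p)\times L_{d-1}(s_p)\times L_{d-1}(t_p)$ with $\tilde X+\tilde Y+\tilde Z=n\ones_{d-1}$, then every point $q=(i',j',k')=(\tilde X_1,\tilde Y_1,\tilde Z_1)$ in the support of $\nu_p$ lies in $A_d(r,s,t)$. The plan is to check the defining conditions of $A_d(r,s,t)$ one at a time, using only that $q$ is the first coordinate of points in the respective layers. (For $d=3$ the layers live in dimension $d-1=2$, so nothing special about the induction is needed; only the support property of the coupling is used.)

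\emph{Membership in $[n+1]^3$ and the sum condition.} Since $\tilde X\in L_{d-1}(r_p)\subseteq [n+1]^{d-1}$, we have $i'=\tilde X_1\in[n+1]$, and likewise for $j',k'$. The first coordinate of the identity $\tilde X+\tilde Y+\tilde Z=n\ones_{d-1}$ gives $i'+j'+k'=n$.

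\emph{The range conditions.} We need $0\le r_q=r-i'\le (d-1)n$, and similarly for $s_q,t_q$. Write $r-i'=i+(r_p-i')$. Since $\tilde X\in L_{d-1}(r_p)$, the remaining $d-2$ coordinates of $\tilde X$ sum to $r_p-i'$, each lying in $[0,n]$, so
\[0\le r_p-i'\le (d-2)n.\]
Combining with $0\le i\le n$ gives
\[0\le r-i' = i+(r_p-i')\le n+(d-2)n=(d-1)n.\]
The same argument applies to $s-j'$ and $t-k'$.

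So $q\in A_d(r,s,t)$ and $\nu_p$ is supported there. There is no real obstacle; the only point requiring care is to decompose $r-i'$ as $i$ plus the sum of the last $d-2$ coordinates of $\tilde X$, rather than trying to bound $r-i'$ directly from $p\in A_d(r,s,t)$. The hypothesis $0\le r_p\le (d-1)n$ is only used implicitly, through the existence of the coupling on $L_{d-1}(r_p)$, which requires that layer to be nonempty.
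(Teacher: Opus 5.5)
Your proposal is correct and is essentially the paper's proof: the paper likewise reads off $i'+j'+k'=n$ from the first coordinate of $\tilde X+\tilde Y+\tilde Z=n\ones_{d-1}$. It then bounds $r-i'$ using $i'\le r_p\le r$ and $i'=r-i-(\text{sum of the other } d-2 \text{ coordinates})\ge r-(d-1)n$, which is exactly your decomposition $r-i'=i+(r_p-i')$.
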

\begin{proof}
    Let $(x,y,z)$ be in the support of $(\tilde{X},\tilde{Y},\tilde{Z})$, then the support of $\nu_p$ is the set of possible values of $(x_1,y_1,z_1)$. Since $x\in L_{d-1}(r_p), y\in L_{d-1}(s_p), z\in L_{d-1}(t_p)$ and $x+y+z=n\ones_{d-1}$, we have $x_1+y_1+z_1=n$. Furthermore, $x_1\leq r_p\leq r$ and 
    \[x_1=r_p-x_2-\cdots -x_{d-1}=r-i-x_2-\cdots -x_{d-1}\geq r-(d-1)n.\]
    Thus, $0\leq r-x_1\leq (d-1)n$. Similarly, $0\leq s-y_1\leq (d-1)n$ and $0\leq t-z_1\leq (d-1)n$. Hence, $(x_1,y_1,z_1)\in A_{d}(r,s,t)$.
\end{proof}

Let $\cP(A_{d}(r,s,t))$ denote the space of probability distributions supported on $A_{d}(r,s,t)$, which can be viewed as a simplex with $|A_{d}(r,s,t)|$ vertices. Let $T:\cP(A_{d}(r,s,t))\to \cP(A_{d}(r,s,t))$ be the map given by
\[T(\kappa) := \sum_{p\in A_{d}(r,s,t)} \kappa(p) \nu_p.\]
By Lemma~\ref{lem:nu-support}, $T(\kappa)$ is a probability distribution supported on $A_{d}(r,s,t)$, so $T$ is a well-defined map. Note that $T$ is linear, hence continuous, and $\cP(A_{d}(r,s,t))$ is convex and compact. Thus, by Brouwer's fixed-point theorem, there exists a fixed point of $T$. Take a fixed point $\kappa$, so we have
\begin{equation} \label{eq:kappa}
    \kappa = \sum_{p\in A_{d}(r,s,t)} \kappa(p) \nu_p.
\end{equation}

We will show that $\kappa$ has the desired marginal distributions. Let $N_{d}(r)=|L_{d}(r)|$, so that $N_{d}(r)>0$ whenever $0\leq r\leq dn$, and $N_d(r)=0$ otherwise. Counting the size of $L_{d}(r)$ by iterating over the first coordinate, we have 
\begin{equation} \label{eq:N-sum}
    N_{d}(r) = \sum_{i=0}^n N_{d-1}(r-i).
\end{equation}
The law of the first coordinate of $U(L_{d}(r))$ is given by
\[q_{d,r}(u) := \frac{N_{d-1}(r-u)}{N_{d}(r)},\]
whenever $0\leq r\leq dn$ and $0\leq u\leq n$. For all other triples $(d,r,u)$, we set $q_{d,r}(u)=0$. 

\begin{lem} \label{lem:qdr}
    For $0\leq r\leq dn$ and every $u \in [n+1]$ we have
    \begin{equation} \label{eq:qdr}
        q_{d,r}(u) = \sum_{i=0}^n q_{d,r}(i)q_{d-1,r-i}(u).
    \end{equation}
\end{lem}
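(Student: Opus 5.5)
Lemma~\ref{lem:qdr} is a law-of-total-probability identity. We read the right-hand side as describing the first two coordinates of a uniform point of $L_d(r)$, and then collapse it using the counting recursion~\eqref{eq:N-sum}.

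Substitute the definitions. For $0\le r\le dn$ and $i\in[n+1]$ we have $q_{d,r}(i)=N_{d-1}(r-i)/N_d(r)$. The second factor is
\[
q_{d-1,r-i}(u)=\frac{N_{d-2}(r-i-u)}{N_{d-1}(r-i)}
\]
whenever $0\le r-i\le (d-1)n$, and $q_{d-1,r-i}(u)=0$ otherwise. For the terms with $0\le r-i\le(d-1)n$, the factor $N_{d-1}(r-i)$ cancels, and the term becomes $N_{d-2}(r-i-u)/N_d(r)$. For the remaining terms, $N_{d-1}(r-i)=0$, so the term is $0$ under either expression. Hence in all cases
\[
\sum_{i=0}^n q_{d,r}(i)\,q_{d-1,r-i}(u)=\frac{1}{N_d(r)}\sum_{i=0}^n N_{d-2}(r-u-i).
\]
Applying~\eqref{eq:N-sum} in dimension $d-1$ with argument $r-u$, the inner sum equals $N_{d-1}(r-u)$. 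The right-hand side is therefore $N_{d-1}(r-u)/N_d(r)=q_{d,r}(u)$, which is the claim.

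The only step needing care is the boundary and small-dimension bookkeeping. We must check that~\eqref{eq:N-sum} holds for all integer arguments, including those outside $[0,(d-1)n]$. It does: both sides count the same set, and both vanish when that set is empty. When $d-1=1$ we also need a convention for $N_0$, namely $N_0(m)=1$ if $m=0$ and $N_0(m)=0$ otherwise. With this convention $N_1(m)=\sum_{i=0}^n N_0(m-i)$ is $1$ exactly when $0\le m\le n$, consistent with~\eqref{eq:N-sum}. In the paper $d\ge3$, so $d-2\ge1$ and this convention is not actually needed.

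Probabilistically, the identity says the following. Let $X$ be uniform on $L_d(r)$. Given $X_1=i$, the vector $(X_2,\dots,X_d)$ is uniform on $L_{d-1}(r-i)$. By exchangeability of the coordinates, $X_2$ has the same law as $X_1$. This gives an alternative one-line proof.
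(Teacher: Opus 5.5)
Your proof is correct and is essentially the paper's argument. You substitute the definitions, cancel $N_{d-1}(r-i)$, note that the out-of-range terms vanish in both expressions, and collapse the sum with \eqref{eq:N-sum}; the vanishing of $N_{d-2}(r-i-u)$ when $N_{d-1}(r-i)=0$ follows from \eqref{eq:N-sum} and nonnegativity since $0\le u\le n$, a step you leave implicit. Your closing remark, computing the law of $X_2$ for $X\sim U(L_d(r))$ in two ways via exchangeability, is a valid alternative one-line proof that the paper does not give.
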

\begin{proof}
    The right-hand side is equal to
    \[\sum_i \frac{N_{d-1}(r-i)}{N_{d}(r)} \cdot \frac{N_{d-2}(r-i-u)}{N_{d-1}(r-i)} = \frac{1}{N_{d}(r)} \sum_i N_{d-2}(r-i-u),\]
    where the sum is over $i$ such that $0\leq i\leq n$ and $0\leq r-i\leq (d-1)n$. Note that if $r-i<0$ or $r-i>(d-1)n$, then $N_{d-2}(r-i-u)=0$. Thus, the right-hand side is equal to
    \[\frac{1}{N_{d}(r)} \sum_{i=0}^n N_{d-2}(r-i-u) = \frac{N_{d-1}(r-u)}{N_{d}(r)} = q_{d,r}(u),\]
    where the first equality follows from \eqref{eq:N-sum}.
\end{proof}

Let $q^*$ be the law of the first coordinate of $\kappa$, that is,
\[q^*(u) = \sum_{p=(i,j,k)\in A_{d}(r,s,t): i=u} \kappa(p).\]
Our aim is to show that $q^*=q_{d,r}$. Note that the first coordinate of $\nu_p$ has the same law as the first coordinate of $U(L_{d-1}(r-i))$, which is $q_{d-1,r-i}$. Then by \eqref{eq:kappa}, $q^*$ satisfies
\begin{align*}
    q^*(u) &= \sum_{p=(i,j,k)\in A_{d}(r,s,t)} \kappa(p) q_{d-1,r-i}(u)\\
    &= \sum_{i=0}^n \sum_{j,k: (i,j,k)\in A_{d}(r,s,t)} \kappa(i,j,k) q_{d-1,r-i}(u)\\
    &= \sum_{i=0}^n q^*(i)q_{d-1,r-i}(u).
\end{align*}
Thus, $q^*$ satisfies the same equation \eqref{eq:qdr} as $q_{d,r}$ in Lemma~\ref{lem:qdr}. To show that they must be the same, observe that \eqref{eq:qdr} says that $q_{d,r}$ is a stationary distribution of the Markov chain with transition matrix $M(i,u)=q_{d-1,r-i}(u)$. The state space of this Markov chain is the set
\[J:=\setcond{i\in \ZZ}{0\leq i\leq n, 0\leq r-i\leq (d-1)n}.\]
Note that for $i,u\in J$, we have $M(i,u)>0$ if and only if $0\leq r-i-u\leq (d-1)n$.
In particular, $M(i,u)>0$ if and only if $M(u,i)>0$, 
so we can represent connectivity in the chain by an undirected graph $G$ on $J$.
If $r\leq (d-1)n$, then $0\in J$ and $M(i,0)>0$ for all $i\in J$, so the Markov chain is ergodic, as $G$ is connected and not bipartite (due to the loop at $0$). If $r>(d-1)n$, then $n\in J$ and $M(i,n)>0$ for all $i\in J$ since $d\geq 3$, so the Markov chain is also ergodic in this case. In either case, the Markov chain is ergodic, so it has a unique stationary distribution, which must be $q_{d,r}=q^*$. Thus, the law of the first coordinate of $\kappa$ is $q_{d,r}$. Similarly, the law of the second coordinate of $\kappa$ is $q_{d,s}$ and the law of the third coordinate of $\kappa$ is $q_{d,t}$.

We are now ready to prove the joint mixability of $U(L_d(r)),U(L_d(s)),U(L_d(t))$. 

\begin{proof}[Proof of Theorem~\ref{thm:joint-mix} for $d\geq 3$]
    Let $\kappa$ be as above. Sample $(i,j,k)$ from $\kappa$, then $i$ has the law $q_{d,r}$, $j$ has the law $q_{d,s}$ and $k$ has the law $q_{d,t}$. By our induction hypothesis, there exists a coupling $(\tilde{X},\tilde{Y},\tilde{Z})$ on $L_{d-1}(r-i)\times L_{d-1}(s-j)\times L_{d-1}(t-k)$ with $\tilde{X}+\tilde{Y}+\tilde{Z}=n\ones_{d-1}$, where $\tilde{X},\tilde{Y},\tilde{Z}$ are uniform on their respective sets. Set 
    \[X=(i,\tilde{X}),\ Y=(j,\tilde{Y}),\ Z=(k,\tilde{Z}).\]
    We claim that this gives the required coupling. Indeed, $i+j+k=n$ and $\tilde{X}+\tilde{Y}+\tilde{Z}=n\ones_{d-1}$, so $X+Y+Z=n\ones_d$. It suffices to verify that $X\sim U(L_d(r))$, since the argument for $Y,Z$ is identical. By definition, the law of the first coordinate of $U(L_d(r))$ is $q_{d,r}$, which is the same as the law of $i$. Conditioned on $i$, the law of the remaining coordinates of $X$ is uniform on $L_{d-1}(r-i)$, which is the same for $U(L_d(r))$. Therefore, $X$ is uniform on $L_d(r)$.
\end{proof}

\begin{cor} \label{cor:joint-mix-sum}
    For $d\geq 1$ and $r,s$ nonnegative integers with $r+s\leq dn$, there exists a coupling $(X,Y,Z)$ on $L_{d}(r)\times L_{d}(s)\times L_{d}(r+s)$ with $X+Y=Z$, with marginals $X\sim U(L_{d}(r))$ and $Y\sim U(L_{d}(s))$ and $Z\sim U(L_{d}(r+s))$.
\end{cor}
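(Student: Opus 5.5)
We deduce Corollary~\ref{cor:joint-mix-sum} from Theorem~\ref{thm:joint-mix} using the reflection $x\mapsto n\ones_d - x$. This reflection is a bijection of $[n+1]^d$ that maps $L_d(m)$ onto $L_d(dn-m)$ for every $m$. It therefore pushes $U(L_d(m))$ forward to $U(L_d(dn-m))$.

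Set $t := dn - r - s$. The hypotheses $r,s\geq 0$ and $r+s\leq dn$ give $t\geq 0$, so $r,s,t$ are nonnegative integers with $r+s+t=dn$. Theorem~\ref{thm:joint-mix} then provides a coupling $(X,Y,W)$ on $L_d(r)\times L_d(s)\times L_d(t)$ with uniform marginals and $X+Y+W = n\ones_d$ almost surely. Here we use that $\EE[U(L_d(m))] = (m/d)\ones_d$ by symmetry of coordinates, so the three means sum to $\frac{r+s+t}{d}\ones_d = n\ones_d$. The proof of the theorem in fact constructs the coupling with this sum directly.

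Now define $Z := n\ones_d - W$. Then $X+Y = n\ones_d - W = Z$ almost surely. Since $W\sim U(L_d(t))$, the reflection gives $Z\sim U(L_d(dn-t)) = U(L_d(r+s))$, and in particular $Z\in L_d(r+s)$. The marginals of $X$ and $Y$ are unchanged. This gives exactly the required coupling.

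There is no substantive obstacle. The only points needing care are the two checks above: that $t$ is a valid nonnegative index, which is exactly where $r+s\leq dn$ is used, and that the reflection preserves both the uniform measure and membership in the lattice cube, which holds because it is a bijection of $[n+1]^d$. The case $d=1$ is covered by the same argument, or directly, since all the sets involved are singletons.
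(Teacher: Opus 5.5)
Your proposal is correct and is essentially the paper's proof: apply Theorem~\ref{thm:joint-mix} with $t=dn-r-s$, then reflect the third coordinate via $Z=n\ones-W$, which maps $U(L_d(t))$ to $U(L_d(r+s))$. Your extra check that the means sum to $n\ones_d$, so that joint mixability yields $X+Y+W=n\ones_d$, is a detail the paper leaves implicit.
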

\begin{proof}
    This follows from Theorem~\ref{thm:joint-mix} by taking $t=dn-r-s$, and observing that if $Z'\sim U(L_{d}(t))$, then $Z=n\ones-Z'\sim U(L_{d}(r+s))$.
\end{proof}

\section{Slice estimates}

Recall that $u_d^*$ is the value of $u$ that maximizes $\Vol(S_d(u))$. Let $L_d^{\RR}(r) := \setcond{x\in [0,1]^d}{\sum_i x_i = r}$, the $r$-slice of the hypercube $[0,1]^d$, whose volume is normalised so that $\int_0^d \Vol_{d-1}(L_d^{\RR}(r))dr=\Vol_d([0,1]^d)=1$. $u_d^*$ is a root of the derivative of $\Vol(S_d(u))$, which is
\[\frac{d}{du}\Vol(S_d(u)) = \Vol_{d-1}(L_d^{\RR}(u)) - 2\Vol_{d-1}(L_d^{\RR}(2u)).\]
Thus $\Vol_{d-1}(L_d^{\RR}(u_d^*))=2\Vol_{d-1}(L_d^{\RR}(2u_d^*))$. Note that there is a unique solution to the above equation in $(0,d/2)$. Indeed, there are no solutions for $u\in (0,d/4]$ since $\Vol_{d-1}(L_d^{\RR}(2u))>\Vol_{d-1}(L_d^{\RR}(u))$. For $u\in (d/4,d/2)$, $\Vol_{d-1}(L_d^{\RR}(u))$ is strictly increasing while $\Vol_{d-1}(L_d^{\RR}(2u))$ is strictly decreasing. Checking the endpoints, we see that there is a unique solution $u_d^*\in (d/4,d/2)$. We record the first few values of $u_d^*$ in the following table.
\begin{center}
\begin{tabular}{c|c}
$d$ & $u_d^*$ \\
\hline
1 & $1/2$ \\
2 & $4/5=0.8$ \\
3 & $\frac{15-\sqrt{15}}{10}\approx 1.112702$ \\
4 & $\approx 1.448321$ \\
5 & $\approx 1.780682$ \\
\end{tabular}
\end{center}

Let $f_d(x)=\Vol_{d-1}(L_d^{\RR}(x))$, then $f_d$ is the probability density function of the sum of $d$ independent uniform $[0,1]$ random variables. In particular, $f_d$ is supported on $[0,d]$, is symmetric about $d/2$ and has a unique maximum at $d/2$. The main estimate we need is the following.

\begin{lem} \label{lem:real-layer-estimate}
    The following inequalities hold for $d\geq 3$.
    \begin{enumerate}
        \item $f_{d-1}(2u_d^*-1)\geq 2.01f_{d-1}(u_d^*-1).$
        \item $0.99f_{d-1}(u_d^*)\geq 2f_{d-1}(u_d^*-1)+4f_{d-1}(2u_d^*).$
    \end{enumerate}
\end{lem}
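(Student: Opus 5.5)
We split the proof into a finite range $3\le d\le d_0$, checked numerically, and a large-$d$ range, handled by the central limit theorem with explicit error bounds. The acknowledgements mention code for verifying the lemma for small $d$, which suggests this split.

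\emph{Locating $u_d^*$.} Write $u_d^*=d/2-a_d\sqrt{d}$, and let $\sigma_d^2=d/12$ be the variance of a sum of $d$ uniforms. Heuristically $f_d(x)\approx \phi((x-d/2)/\sigma_d)/\sigma_d$. The point $2u_d^*$ lies at $d/2+a_d\sqrt d$ by symmetry, which is the same distance from the center as $u_d^*$. So to leading order $f_d(u)=f_d(2u)$, and the defining equation $f_d(u_d^*)=2f_d(2u_d^*)$ forces an asymmetry.

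Set $u=d/2-\delta$. Then $2u=d-2\delta$, which sits at distance $d/2-2\delta$ from the center, far in the tail unless $\delta\approx d/4$. So in fact $u_d^*\approx d/4+\Theta(\sqrt{d\log d})$ or similar, with $2u_d^*$ in the upper tail and $u_d^*$ in the lower tail. A cleaner reparametrisation is $u_d^*=d/4+c$: then $u$ is at distance $d/4-c$ below the center and $2u$ at distance $d/4+ \ldots$ hmm, actually $2u-d/2=2c$, so $2u_d^*$ is only $2c$ above the center. The equation then reads $f_d(d/4+c)=2f_d(d/2+2c)$.

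This makes $u_d^*$ sit at about $d/4+O(\sqrt{d\log d})$, where $f_d$ is exponentially small relative to its peak, while $2u_d^*$ is near the peak. I would pin this down with large-deviation (Cramér/Chernoff) asymptotics for $f_d$ in the lower region and the local CLT near the center. From these I would extract the ratios $f_{d-1}(x-1)/f_{d-1}(x)$ and $f_{d-1}(x)/f_d(x)$ at $x=u_d^*$ and $x=2u_d^*$.

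\emph{Reducing to one-step ratios.} Both inequalities are comparisons of $f_{d-1}$ at points shifted by $1$ or scaled by $2$. Using $f_d(x)=\int_0^1 f_{d-1}(x-y)\,dy$ and log-concavity of $f_{d-1}$, the ratio $f_{d-1}(u-1)/f_{d-1}(u)$ is governed by the exponential tilt parameter $\theta$ at $u/d$. In the deep lower region this ratio is roughly $e^{-\theta}$ with $\theta$ large, which makes $f_{d-1}(u_d^*-1)$ tiny.

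Inequality 2 then reduces to checking $f_{d-1}(u_d^*)\gtrsim 4f_{d-1}(2u_d^*)/0.99$ plus a small term. The defining relation gives $f_d(u_d^*)=2f_d(2u_d^*)$, and I would transfer it to $f_{d-1}$ via ratio bounds. Inequality 1 compares the peak region $f_{d-1}(2u_d^*-1)$, which is near the center, against the tail value $f_{d-1}(u_d^*-1)$. This is easy once the tail is shown to be at least a factor $2.01$ smaller, which follows from the defining equation combined with the monotonicity ratios.

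\emph{Main obstacle.} The hard step is making all the asymptotics effective with explicit constants, so that they hold for $d>d_0$ with $d_0$ small enough (say $\le 50$) for exact computation to cover the rest. For $d\le d_0$, $f_d$ is an explicit piecewise polynomial (the Irwin–Hall density), $u_d^*$ can be computed by root-finding with interval arithmetic, and the inequalities are checked directly. For large $d$ I would first try explicit Berry–Esseen/Edgeworth-type local bounds for the Irwin–Hall density, together with log-concavity to control the ratios. If those constants turn out too weak, I would use the exact saddle-point representation of $f_d$ instead.
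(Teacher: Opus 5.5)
\paragraph{Gap: $u_d^*$ is in the wrong place.} Your overall architecture matches the paper's: a computer check for small $d$, an effective asymptotic argument for large $d$, and log-concavity plus exponential tilting to control one-step ratios. But you locate $u_d^*$ incorrectly, and everything downstream depends on that. Since $f_d$ is symmetric about $d/2$, we have $f_d(2u)=f_d(d-2u)$. So the defining equation $f_d(u)=2f_d(2u)$ compares $f_d$ at $u$ and at $d-2u$. These two points coincide at $u=d/3$, and the factor $2$ only moves the root by $O(1)$: $u_d^*\approx d/3+\log 2/(3|\sigma^*|)\approx d/3+0.1075$, where $\sigma^*\approx-2.149$ is the tilt with mean $1/3$. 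In your own parametrisation $u=d/4+c$, balancing the distances $d/4-c$ and $2c$ from the centre forces $c\approx d/12$, not $c=O(\sqrt{d\log d})$. If $u_d^*$ were near $d/4$ with $2u_d^*$ near the peak, then $f_d(u_d^*)$ would be exponentially smaller than $f_d(2u_d^*)$, so the defining equation could not hold; and inequality 2 would actually be false. Write $\alpha=u_d^*-d/3$ and reflect $f_{d-1}$ about $(d-1)/2$. The four quantities in the lemma are then $f_{d-1}$ evaluated at $d/3+\alpha$, $d/3+\alpha-1$, $d/3-2\alpha$ and $d/3-2\alpha-1$. All four points lie within distance $1$ of $(d-1)/3$, in the same large-deviation regime.

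\paragraph{Consequences for your reductions.} Your qualitative reductions are therefore wrong. Inequality 1 is not ``peak versus tail'': the ratio tends to about $e^{|\sigma^*|(1-3\alpha)}\approx 4.3$. The relevant tilt is a constant near $-2.15$, not large. In inequality 2 the term $2f_{d-1}(u_d^*-1)$ is not a small correction; heuristically it equals $4f_{d-1}(2u_d^*)$, because $e^{3\sigma^*\alpha}=1/2$. So the true margin is only a factor of about $2.15$.

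\paragraph{What must be proved, and how the paper does it.} What is needed is an explicit two-sided bound on $f_d'/f_d$, uniformly on an $O(1)$ window around $d/3$. Cramér asymptotics cannot give this, since they fix $\log f_d$ only up to $o(d)$. Untilted Berry--Esseen or Edgeworth bounds cannot either: their errors are additive, while $f_d(d/3)$ is exponentially small. The missing idea is the paper's. The tilted density $f_d(x;\sigma)=e^{\sigma x}f_d(x)/M(\sigma)^d$ is log-concave, hence unimodal. A unimodal density has its mode within $\sqrt3$ standard deviations of its mean. Since $f_d'(x;\sigma)$ has the sign of $\sigma+f_d'(x)/f_d(x)$, taking $\sigma_1=-3$ and $\sigma_2=-1.6$ gives $1.6\le f_d'/f_d\le 3$ on $[d/3-1,d/3+1]$ for $d\ge 200$. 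This yields $0.077\le\alpha\le 0.145$ and then both inequalities, with $d<200$ checked numerically. Your saddle-point fallback, i.e.\ an explicit tilted local limit theorem at $x\approx d/3$, could in principle replace this. As written, though, it is unspecified and aimed at the wrong point.
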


\subsection{Heuristics}

Let us first give some non-rigorous heuristics justifying the above inequalities for large $d$, and also obtain estimates for $u_d^*$. Recall that $u_d^*$ is the unique solution to $f_d(u)=2f_d(2u)$, for $u\in [0,d/2]$. Since $f_d(d/3) = f_d(2d/3)$, we expect $u_d^*$ to be close to $d/3$. 

Using large deviation estimates, we can estimate the behaviour of $f_d$ near $d/3$. Let $X$ be a uniform random variable on $[0,1]$. Let $M(\lambda)=\EE[e^{\lambda X}]=\frac{e^\lambda-1}{\lambda}$ be the moment generating function of $X$, and let $K(\lambda)=\log M(\lambda)$ be the cumulant generating function of $X$. Let $K^*(x)=\sup_{\lambda\in \RR}(\lambda x - K(\lambda))$, the Fenchel--Legendre transform of $K$. Then by standard large deviation estimates (which can be proven using Cramer's theorem~\cite[Theorem~2.2.3]{DZ09} and log-concavity), for fixed $x\in [0,1]$, we have
\[\lim_{d\to \infty} \frac{1}{d}\log f_d(xd) = -K^*(x).\]
To continue with our heuristics, let us assume that 
\begin{equation} \label{eq:lde-assumption}
    \log f_d(xd)=-dK^*(x).
\end{equation}
To analyse the behaviour of $f_d$ near $d/3$, let $\sigma\approx -2.149126...$ be the unique solution to $K'(\sigma)=1/3$, then $K^*(1/3) = \sigma/3 - K(\sigma)$ and $(K^*)'(1/3)=\sigma$. Differentiating \eqref{eq:lde-assumption} with respect to $x$ at $x=1/3$, we have
\[\frac{f_d'(d/3)}{f_d(d/3)} = -\sigma,\]
that is to say, for small (constant-sized) $z$, we have $f_d(d/3+z) \approx f_d(d/3)e^{-\sigma z}$. Writing $u_d^* = d/3+\alpha$ and using $f_d(u_d^*)=2f_d(2u_d^*)$, we have
\[e^{-\sigma \alpha}\approx 2e^{2\sigma \alpha},\]
giving $\alpha\approx -\log 2/(3\sigma)\approx 0.107508$. Now we have the following estimates.
\begin{align*}
    f_{d-1}(u_d^*) &= f_{d-1}(d/3+\alpha) \approx f_{d-1}(d/3)e^{-\sigma \alpha}\approx 1.260 f_{d-1}(d/3), \\
    f_{d-1}(u_d^*-1) &= f_{d-1}(d/3+\alpha-1) \approx f_{d-1}(d/3)e^{-\sigma(\alpha-1)}\approx 0.147 f_{d-1}(d/3), \\
    f_{d-1}(2u_d^*-1) &= f_{d-1}(d/3-2\alpha) \approx f_{d-1}(d/3)e^{2\sigma \alpha}\approx 0.630 f_{d-1}(d/3), \\
    f_{d-1}(2u_d^*) &= f_{d-1}(d/3-2\alpha-1) \approx f_{d-1}(d/3)e^{\sigma (2\alpha+1)}\approx 0.0734 f_{d-1}(d/3).
\end{align*}
These estimates do indeed satisfy the lemma with plenty of room to spare. We also have the estimate $u_d^* \approx d/3 + 0.107508$.

\subsection{Log-concavity}

To make the above estimates rigorous, we will exploit the log-concavity of $f_d$ in order to prove explicit bounds on $f_d'(x)/f_d(x)$ near $d/3$.

\begin{defn}
    A distribution on $\RR$ with probability density function $f:\RR\to [0,\infty)$ is \emph{log-concave} if $\log f$ is a concave function.
\end{defn}
\begin{fact} \label{fact:log-concave-add}
    If $X,Y$ are independent log-concave random variables, then $X+Y$ is also log-concave.
\end{fact}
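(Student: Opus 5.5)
The statement to prove is Fact~\ref{fact:log-concave-add}: the sum of two independent log-concave random variables is log-concave. I would derive it from the Pr\'ekopa--Leindler inequality. Let $X$ have density $f$ and $Y$ have density $g$, both log-concave. Then $X+Y$ has density
\[h(z)=\int_{\RR} f(x)g(z-x)\,dx.\]
It therefore suffices to prove that $\log h$ is concave. Concretely, for $z_0,z_1\in\RR$ and $\lambda\in(0,1)$, with $z_\lambda=(1-\lambda)z_0+\lambda z_1$, I need
\[h(z_\lambda)\geq h(z_0)^{1-\lambda}h(z_1)^\lambda.\]

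First define $F(x,z)=f(x)g(z-x)$ on $\RR^2$. The map $(x,z)\mapsto(x,z-x)$ is linear, so $\log g(z-x)$ is concave in $(x,z)$. Hence $\log F$, a sum of two concave functions, is jointly concave (taking values in $[-\infty,\infty)$). Next, fix $z_0,z_1,\lambda$ and set $\phi_i(x)=F(x,z_i)$ for $i=0,1$ and $\psi(x)=F(x,z_\lambda)$. By joint log-concavity, for all $x_0,x_1$,
\[\psi\big((1-\lambda)x_0+\lambda x_1\big)\geq \phi_0(x_0)^{1-\lambda}\phi_1(x_1)^\lambda.\]
This is exactly the hypothesis of the one-dimensional Pr\'ekopa--Leindler inequality, which gives
\[\int\psi\geq\Big(\int\phi_0\Big)^{1-\lambda}\Big(\int\phi_1\Big)^{\lambda},\]
and this is the desired inequality for $h$.

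The main obstacle is justifying Pr\'ekopa--Leindler in one dimension, if it is not taken as known. I would prove it by the standard transport argument. Normalise so that $\int\phi_0=\int\phi_1=1$; this is possible because the hypothesis scales homogeneously, and the case where either integral vanishes is trivial. Let $x_0(t),x_1(t)$ be the quantile maps, defined by $\int_{-\infty}^{x_i(t)}\phi_i=t$. These are increasing, and $x_i'(t)=1/\phi_i(x_i(t))$ almost everywhere. Put $x_\lambda(t)=(1-\lambda)x_0(t)+\lambda x_1(t)$. Changing variables and then applying weighted AM--GM,
\[\int\psi\geq\int_0^1\psi(x_\lambda(t))\,x_\lambda'(t)\,dt\geq\int_0^1\phi_0^{1-\lambda}\phi_1^\lambda\,\big((1-\lambda)x_0'+\lambda x_1'\big)\,dt\geq\int_0^1\phi_0^{1-\lambda}\phi_1^\lambda\,(x_0')^{1-\lambda}(x_1')^\lambda\,dt=1.\]
Here $\phi_i$ and $x_i'$ are evaluated at $x_i(t)$ and $t$ respectively. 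The measure-theoretic points (monotone maps being differentiable almost everywhere, and the inequality holding for their change of variables) are routine. The bounded-support case suffices, since the general case follows by monotone approximation.

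Finally, I would note the degenerate cases. If $X$ or $Y$ is a point mass, the claim is trivial. Log-concavity is also preserved in the limit, which matters only if one later applies the fact to $f_d$, the $d$-fold convolution of the uniform density $\ones_{[0,1]}$. That density is log-concave because its logarithm is $0$ on $[0,1]$ and $-\infty$ elsewhere. Induction on $d$ then shows that every $f_d$ is log-concave, which is how the fact will be used.
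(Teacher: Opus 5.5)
Your proof is correct. The paper gives no proof of this Fact: it quotes it as a standard property of log-concave densities and uses it only to conclude that $f_d$ and the tilted densities $f_d(\cdot;\sigma)$ are log-concave. So there is no argument in the paper to compare against. What you give is the standard proof: $(x,z)\mapsto f(x)g(z-x)$ is jointly log-concave, and the Pr\'ekopa--Leindler inequality shows that marginals of log-concave functions are log-concave. Including it makes the paper self-contained at this point.

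Your setting allows a few simplifications.

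\begin{itemize}
\item The sections $\phi_0,\phi_1$ are themselves log-concave. Their supports are therefore intervals, on whose interiors they are positive and continuous. (If a support has empty interior, its integral vanishes, which is your trivial case.) Hence the quantile maps $x_i$ are $C^1$ on $(0,1)$ with $x_i'=1/\phi_i(x_i)$ everywhere, and the change of variables for $x_\lambda$ is exact. This makes the almost-everywhere caveats and the bounded-support approximation unnecessary.
\item The normalisation step needs $\int\phi_i<\infty$. This holds because a log-concave density is bounded, so $h(z)\leq\sup g<\infty$ for every $z$.
\item The remarks about point masses and limits are moot. The paper defines log-concavity only for distributions with a density, and $f_d$ arises from finitely many convolutions.
\end{itemize}
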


We will need the following bound on the location of the mode of a unimodal distribution in terms of its mean and variance.

\begin{lem} \label{lem:unimodal}
    Let $X$ be a random variable on $\RR$ with probability density function $f$. Suppose $\EE[X]=\mu, \Var(X)=\sigma^2$, and $a\in \RR$ is such that $f$ is increasing on $(-\infty,a)$ and decreasing on $(a,\infty)$. Then 
    \[|a-\mu|\leq \sqrt{3}\sigma.\]
\end{lem}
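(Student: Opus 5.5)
The plan is to prove the classical Johnson--Rogers bound by writing $X$ as a mixture of uniform distributions on intervals that have $a$ as an endpoint, and then applying Jensen's inequality to the mixing variable.

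\emph{Step 1: layer-cake decomposition.} We may assume $\sigma<\infty$. By the layer-cake formula,
\[f(x)=\int_0^\infty \ones\{f(x)>h\}\,dh .\]
Since $f$ is increasing on $(-\infty,a)$ and decreasing on $(a,\infty)$, each superlevel set $\{f>h\}$ is an interval $I_h$ with endpoints $a-\alpha_h$ and $a+\beta_h$, where $\alpha_h,\beta_h\ge 0$. It contains $a$ up to a null set, and it has finite length whenever $h>0$, since $f$ is integrable. Hence the law of $X$ is the mixture, with weights proportional to $|I_h|\,dh$, of the uniform distributions on the intervals $I_h$.

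\emph{Step 2: split at $a$.} The uniform distribution on $[a-\alpha,a+\beta]$ is itself a mixture of the uniform distributions on $[a-\alpha,a]$ and $[a,a+\beta]$, with weights $\alpha/(\alpha+\beta)$ and $\beta/(\alpha+\beta)$. Combining this with Step 1, the law of $X$ is a mixture of uniform distributions on intervals with one endpoint equal to $a$ and signed length $v\in\RR$. Equivalently, $X\overset{d}{=}a+UV$, where $U\sim U([0,1])$ is independent of some real random variable $V$. This is Khintchine's representation of unimodal laws, which I could alternatively just cite.

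\emph{Step 3: moment computation.} Using independence and $\EE[U]=1/2$, $\EE[U^2]=1/3$, we get
\[\mu-a=\EE[U]\,\EE[V]=\tfrac12\EE[V], \qquad \EE[(X-a)^2]=\EE[U^2]\,\EE[V^2]=\tfrac13\EE[V^2].\]
In particular $\EE[V^2]<\infty$, since $X$ has finite variance. Therefore
\[\sigma^2=\EE[(X-a)^2]-(\mu-a)^2=\tfrac13\EE[V^2]-\tfrac14\EE[V]^2\ \ge\ \tfrac13\EE[V]^2-\tfrac14\EE[V]^2=\tfrac1{12}\EE[V]^2=\tfrac13(\mu-a)^2,\]
where the inequality is Jensen's (or Cauchy--Schwarz). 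This gives $|a-\mu|\le\sqrt3\,\sigma$.

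\emph{Main obstacle.} The only delicate point is the measure-theoretic justification of Step 1. One must check that the superlevel sets are intervals whose endpoints are at $a$ up to null sets, and that the mixing weights $|I_h|\,dh$ integrate to $1$, which follows by integrating the layer-cake formula over $x$. The moment identities then follow from Fubini's theorem applied to the mixture. Everything else is a direct computation.
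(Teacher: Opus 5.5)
Your proof is correct, and it takes a genuinely different route from the paper's.

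\textbf{The paper's argument.} The paper never represents $X$ as a mixture. After normalising to $\mu=0$, $\sigma=1$, $a\ge 0$, it uses the test polynomial $g(x)=(x+a)(x-a/3)$. This $g$ is $\le 0$ on $[-a,a/3]$, positive elsewhere, and satisfies $\int_{-a}^a g=0$. Comparing $f$ with the constant $f(a/3)$ on $[-a,a/3]$ and on $[a/3,a]$, and discarding the nonnegative contribution from outside $[-a,a]$, gives $\EE[g(X)]\ge f(a/3)\int_{-a}^a g=0$. Expanding, $\EE[g(X)]=\sigma^2-a^2/3$, which finishes the proof. This takes a few lines, uses only pointwise monotonicity of $f$, and needs no measure-theoretic bookkeeping.

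\textbf{What your route costs and buys.} Your argument needs the layer-cake/Khintchine representation $X\overset{d}{=}a+UV$ and the associated Tonelli arguments. You handle these adequately: $\int_0^\infty|I_h|\,dh=1$, and $\EE[(X-a)^2]=\tfrac13\EE[V^2]$ gives $\EE[V^2]<\infty$, hence also $\EE|V|<\infty$ for the first-moment identity. In return, your route explains where $\sqrt3$ comes from. It also identifies the equality case directly: Jensen is tight iff $V$ is a.s.\ constant, i.e.\ $X$ is uniform on an interval with the mode at an endpoint, which matches the paper's remark.

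\textbf{The two proofs are dual.} In the normalised setting $\EE[V]=-2a$ and $\int_0^1 g(a+uv)\,du=\tfrac13(v+2a)^2$. So the paper's inequality $\EE[g(X)]\ge 0$ is exactly your Jensen step $\Var(V)\ge 0$. The paper's ``magic polynomial'' is precisely a quadratic whose Khintchine average is a perfect square.
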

\begin{proof}
    By scaling and translating $X$, we may assume that $\mu=0$ and $\sigma=1$. By symmetry, we may further assume that $a\geq 0$, and our aim is to show that $a\leq \sqrt{3}$. Consider the magic polynomial
    \[g(x)=(x+a)(x-a/3).\]
    Then $g(x)\leq 0$ for $x\in [-a,a/3]$ and $g(x)>0$ everywhere else. The main property of $g$ is that
    \[\int_{-a}^a g(x)dx=0.\]
    Using nonnegativity and unimodality of $f$, we have
    \begin{align*}
        \EE[g(X)] &= \int_{-\infty}^\infty g(x)f(x)dx \\
        &\geq \int_{-a}^{a/3} g(x)f(x)dx + \int_{a/3}^a g(x)f(x)dx \\
        &\geq \int_{-a}^{a/3} g(x)f(a/3)dx + \int_{a/3}^a g(x)f(a/3)dx \\
        &= f(a/3) \int_{-a}^a g(x)dx = 0.
    \end{align*}
    Therefore, we have
    \begin{align*}
        0 &\leq \EE[g(X)] = \EE[X^2+(2a/3)X - a^2/3] = \sigma^2-a^2/3.
    \end{align*}
    This gives $a\leq \sqrt{3}\sigma$, as required.
\end{proof}
\begin{rmk}
    The constant $\sqrt{3}$ is tight, as witnessed by the uniform distribution on $[-1,1]$, which has mean $0$, variance $1/3$, and $a=1$.
\end{rmk}

\subsection{Proof of the slice estimates}

\begin{proof}[Proof of Lemma~\ref{lem:real-layer-estimate}]
    Let $X$ be the uniform random variable on $[0,1]$, then $f_d$ is the probability density function of the sum of $d$ independent copies of $X$. Since the probability density function of $X$ is the indicator function on $[0,1]$, which is log-concave, $f_d$ is also log-concave by Fact~\ref{fact:log-concave-add}. In particular, $f_d'(x)/f_d(x)$ is a decreasing function of $x$. Let $M(t)=\EE[e^{tX}]=\frac{e^t-1}{t}$ be the moment generating function of $X$.

    For $\sigma\in\RR$, let $X_\sigma$ be the exponential tilt of $X$, which is the random variable with probability density function
    \[f(x;\sigma) = \frac{e^{\sigma x}}{M(\sigma)}f(x).\]
    This is indeed a probability density function since
    \[\int_{-\infty}^\infty f(x;\sigma)dx = \frac{1}{M(\sigma)}\int_{-\infty}^{\infty} e^{\sigma x}f(x)dx = 1.\]
    Let us compute the mean and variance of $X_\sigma$. We have
    \begin{align*}
        \EE[X_\sigma] &= \int_{-\infty}^\infty xf(x;\sigma)dx = \frac{1}{M(\sigma)}\int_{-\infty}^\infty xe^{\sigma x}f(x)dx = \frac{M'(\sigma)}{M(\sigma)}, \\
        \EE[X_\sigma^2] &= \int_{-\infty}^\infty x^2f(x;\sigma)dx = \frac{1}{M(\sigma)}\int_{-\infty}^\infty x^2e^{\sigma x}f(x)dx = \frac{M''(\sigma)}{M(\sigma)}.
    \end{align*}
    Therefore, 
    \[\Var(X_\sigma) = \EE[X_\sigma^2] - \EE[X_\sigma]^2 = \frac{M''(\sigma)}{M(\sigma)} - \paren{\frac{M'(\sigma)}{M(\sigma)}}^2.\]

    Let $Y_\sigma$ be the sum of $d$ independent copies of $X_\sigma$, and let $f_d(\cdot;\sigma)$ be the probability density function of $Y_\sigma$. Since $f(\cdot;\sigma)$ is log-concave, $f_d(\cdot;\sigma)$ is also log-concave by Fact~\ref{fact:log-concave-add}. 

    \begin{claim}
        We have
        \[f_d(x;\sigma) = \frac{e^{\sigma x}}{M(\sigma)^d}f_d(x).\]
    \end{claim}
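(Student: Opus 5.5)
The claim is the standard fact that exponential tilting commutes with convolution. I will prove it by induction on $d$, using the convolution formula for densities of sums of independent variables.

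For $d=1$ the identity is exactly the definition of $f(\cdot;\sigma)$, since $f_1=f$ is the density of $X$.

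For the inductive step, suppose $f_{d-1}(y;\sigma)=e^{\sigma y}f_{d-1}(y)/M(\sigma)^{d-1}$. Since $Y_\sigma$ is the sum of a copy of $Y^{(d-1)}_\sigma$ and an independent copy of $X_\sigma$, its density is the convolution
\[f_d(x;\sigma)=\int_{-\infty}^{\infty} f_{d-1}(x-y;\sigma)\,f(y;\sigma)\,dy.\]
Substituting the inductive hypothesis and the definition of $f(\cdot;\sigma)$ gives
\[f_d(x;\sigma)=\int_{-\infty}^{\infty} \frac{e^{\sigma(x-y)}}{M(\sigma)^{d-1}}f_{d-1}(x-y)\cdot\frac{e^{\sigma y}}{M(\sigma)}f(y)\,dy.\]
The exponentials combine to $e^{\sigma x}$, which is independent of $y$ and can be pulled out of the integral. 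This leaves
\[f_d(x;\sigma)=\frac{e^{\sigma x}}{M(\sigma)^d}\int_{-\infty}^{\infty} f_{d-1}(x-y)f(y)\,dy=\frac{e^{\sigma x}}{M(\sigma)^d}f_d(x),\]
because the last integral is precisely the convolution formula for the density of a sum of $d$ independent uniforms.

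No step here is a genuine obstacle. The only point needing care is that $f_d$ is a density almost everywhere, so the identity is an equality of densities. Since both sides are continuous for $d\ge2$, it in fact holds pointwise. An alternative route would be to compare moment generating functions: $\EE[e^{\lambda Y_\sigma}]=(M(\lambda+\sigma)/M(\sigma))^d$, which is also the transform of the right-hand side. The direct convolution computation is simpler, and it is the one I would use.
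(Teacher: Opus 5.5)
Your proof is correct and is essentially the paper's argument. Both run induction on $d$ with the definition as the base case, write $f_d(\cdot;\sigma)$ as the convolution of $f_{d-1}(\cdot;\sigma)$ with $f(\cdot;\sigma)$, and pull the combined factor $e^{\sigma x}/M(\sigma)^d$ out of the integral to recover $f_d(x)$.
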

    \begin{proof}
        We argue by induction on $d$. For $d=1$, this is true by definition. For $d\geq 2$, we have
        \begin{align*}
            f_d(x;\sigma) &= \int_{-\infty}^\infty f_{d-1}(x-y;\sigma)f(y;\sigma)dy \\
            &= \int_{-\infty}^\infty \frac{e^{\sigma (x-y)}}{M(\sigma)^{d-1}}f_{d-1}(x-y) \cdot \frac{e^{\sigma y}}{M(\sigma)}f(y) dy \\
            &= \frac{e^{\sigma x}}{M(\sigma)^d} \int_{-\infty}^\infty f_{d-1}(x-y)f(y) dy = \frac{e^{\sigma x}}{M(\sigma)^d}f_d(x).
        \end{align*}
    \end{proof}

    We would like to show that $f_d'(x)/f_d(x)$ is approximately $-\sigma^*$ for $x$ close to $d/3$, where $\sigma^*=-2.149...$ is the unique solution to $M'(\sigma^*)/M(\sigma^*)=1/3$. By log-concavity, $f_d'/f_d$ is decreasing in $x$. We will show a lower bound on $f_d'(x)/f_d(x)$ for $x$ slightly above $d/3$, and an upper bound for $x$ slightly below $d/3$. For our application, we are aiming for the lower and upper bounds of $\sigma_1:=-3$ and $\sigma_2:=-1.6$ respectively.

    Let $\mu_i=\EE[X_{\sigma_i}]$ and $s_i^2=\Var(X_{\sigma_i})$ for $i=1,2$. We have the following approximate values:
    \begin{align*}
        \mu_1 &\approx 0.280938, \quad s_1 \approx 0.236580, \\
        \mu_2 &\approx 0.372030, \quad s_2 \approx 0.271405.
    \end{align*}
    We have $\EE[Y_{\sigma_i}] = d\mu_i$ and $\Var(Y_{\sigma_i}) = d s_i^2$. Since $f_d(\cdot;\sigma_i)$ is log-concave, it is unimodal, so by Lemma~\ref{lem:unimodal}, we have $f_d'(x;\sigma_i)\leq 0$ for $x\geq d\mu_i + \sqrt{3d} s_i$, and $f_d'(x;\sigma_i)\geq 0$ for $x\leq d\mu_i - \sqrt{3d} s_i$. Therefore, for $x\geq d\mu_i + \sqrt{3d} s_i$, we have
    \begin{align*}
        0 &\geq f_d'(x;\sigma_i) = \frac{e^{\sigma_i x}}{M(\sigma_i)^d}(\sigma_i f_d(x) + f_d'(x)) \\
        \implies \frac{f_d'(x)}{f_d(x)} &\leq -\sigma_i.
    \end{align*}
    Similarly, $f_d'(x)/f_d(x)\geq -\sigma_i$ for $x\leq d\mu_i - \sqrt{3d} s_i$. Thus, we have
    \[-\sigma_2 \leq \frac{f_d'(x)}{f_d(x)} \leq -\sigma_1\]
    for $x\in [d\mu_1 + \sqrt{3d} s_1, d\mu_2 - \sqrt{3d} s_2]$. In particular, this implies that for $d\mu_1 + \sqrt{3d} s_1 \leq x\leq y\leq d\mu_2 - \sqrt{3d} s_2$, we have
    \begin{equation} \label{eq:fd-ratio-bound}
        e^{-\sigma_2(y-x)} \leq \frac{f_d(y)}{f_d(x)} \leq e^{-\sigma_1(y-x)}.
    \end{equation}
    For $d\geq 200$, the interval $[d\mu_1 + \sqrt{3d} s_1, d\mu_2 - \sqrt{3d} s_2]$ contains $[d/3-1, d/3+1]$, so \eqref{eq:fd-ratio-bound} holds for all $x,y\in [d/3-1, d/3+1]$. For $5\leq d < 200$, we directly verify the lemma by numerically computing the relevant quantities, see Appendix~\ref{sec:numerical}.

    We now estimate the value of $u_d^*$. Let $u_d^* = d/3 + \alpha$, then $f_d(u_d^*)=2f_d(2u_d^*)$ implies that
    \[\frac{f_d(d/3+\alpha)}{f_d(2d/3+2\alpha)} = \frac{f_d(d/3+\alpha)}{f_d(d/3-2\alpha)} = 2.\]
    \eqref{eq:fd-ratio-bound} then gives
    \begin{align*}
        e^{-3\sigma_2\alpha} &\leq 2\leq e^{-3\sigma_1\alpha}\\
        \implies 0.077016 &\leq \alpha \leq 0.144406.
    \end{align*}
    Note that a priori \eqref{eq:fd-ratio-bound} may not apply since we do not know the value of $\alpha$ yet. However, the above estimates show that $f_d(d/3+0.5)/f_d(d/3-1)>2$. Then, the uniqueness of $\alpha$ implies that $\alpha<0.5$, so \eqref{eq:fd-ratio-bound} does apply.

    Recall that the lemma is the following two inequalities.
    \begin{enumerate}
        \item $f_{d-1}(2u_d^*-1)\geq 2.01f_{d-1}(u_d^*-1).$
        \item $0.99f_{d-1}(u_d^*)\geq 2f_{d-1}(u_d^*-1)+4f_{d-1}(2u_d^*).$
    \end{enumerate}
    In our computations below, we only evaluate $f_{d-1}(x)$ for $x\in [(d-1)/3-1, (d-1)/3+1]$, so \eqref{eq:fd-ratio-bound} holds.

    For (1), we have
    \begin{align*}
        \frac{f_{d-1}(2u_d^*-1)}{f_{d-1}(u_d^*-1)} &= \frac{f_{d-1}(2d/3+2\alpha-1)}{f_{d-1}(d/3+\alpha-1)} \\
        &= \frac{f_{d-1}(d/3-2\alpha)}{f_{d-1}(d/3+\alpha-1)} \\
        &\geq e^{-\sigma_2(1-3\alpha)}\\
        &\geq e^{-\sigma_2}/2 \approx 4.953032/2 > 2.01.
    \end{align*}

    For (2), we have
    \begin{align*}
        \frac{2f_{d-1}(u_d^*-1)}{f_{d-1}(u_d^*)} + \frac{4f_{d-1}(2u_d^*)}{f_{d-1}(u_d^*)} &= \frac{2f_{d-1}(d/3+\alpha-1)}{f_{d-1}(d/3+\alpha)} + \frac{4f_{d-1}(2d/3+2\alpha)}{f_{d-1}(d/3+\alpha)} \\
        &= \frac{2f_{d-1}(d/3+\alpha-1)}{f_{d-1}(d/3+\alpha)} + \frac{4f_{d-1}(d/3-2\alpha-1)}{f_{d-1}(d/3+\alpha)} \\
        &\leq 2e^{\sigma_2}+4e^{\sigma_2(3\alpha+1)} \\
        &\leq 0.961800 < 0.99.
    \end{align*}

\end{proof}

\section{Weights for the dual program}

Let $\pi:\RR^d\to \RR^{d-1}$ be the projection that forgets the last coordinate. For a subset $S\subseteq [n+1]^d$ and a point $x\in [n+1]^{d-1}$, define the fiber $S(x) := \setcond{y\in [n+1]}{(x,y)\in S}$. We begin with the following key observation from Lepsveridze and Sun~\cite{LS23}.

\begin{lem}[{\cite[Lemma~2.3]{LS23}}] \label{lem:fiber-bound}
    Let $S\subseteq [n+1]^d$ be sum-free. Then for every $x,y,z\in [n+1]^{d-1}$ with $x+y=z$, we have
    \begin{equation} \label{eq:fiber-bound}
        |S(x)|+|S(y)|+|S(z)|\leq 2(n+1).
    \end{equation}
\end{lem}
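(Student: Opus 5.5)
The plan is to argue directly from sum-freeness of $S$ by comparing the three fibers $S(x)$, $S(y)$, $S(z)$ as subsets of $[n+1]=\{0,1,\ldots,n\}$. Write $A=S(x)$, $B=S(y)$, $C=S(z)$. Since $x+y=z$, sum-freeness of $S$ says that for $a\in A$ and $b\in B$ with $a+b\le n$, the point $(z,a+b)$ is not in $S$. Hence
\[
(A+B)\cap C=\emptyset,
\]
where $A+B$ is taken inside $[n+1]$, i.e.\ we discard sums exceeding $n$. It then suffices to prove the purely one-dimensional inequality
\[
|A|+|B|+|C|\le 2(n+1)\qquad\text{whenever } (A+B)\cap C\cap[0,n]=\emptyset .
\]

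To prove this, we may assume $A$ and $B$ are both nonempty. If, say, $A=\emptyset$, the bound is trivial, since $|B|+|C|\le 2(n+1)$. Let $a_0=\min A$. The translate $a_0+B$ is then disjoint from $C$ within $[0,n]$. Consider $B'=\{b\in B: a_0+b\le n\}$. Then $C$ avoids $a_0+B'$, which gives
\[
|C|\le n+1-|B'| .
\]
The elements of $B\setminus B'$ lie in $(n-a_0,n]$, so $|B\setminus B'|\le a_0$. Also $A\subseteq[a_0,n]$, so $|A|\le n+1-a_0$. Adding these,
\[
|A|+|B|+|C|\le (n+1-a_0)+|B'|+a_0+(n+1-|B'|)=2(n+1).
\]

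The only point needing care is that sums landing beyond $n$ give no constraint. This is handled exactly by the split $B=B'\cup(B\setminus B')$, using that $B\setminus B'$ is confined to an interval of length $a_0$. With that checked, no step seems to be a real obstacle: the argument is a Cauchy--Davenport-type count in $\ZZ$ using only a single translate.
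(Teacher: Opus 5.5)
Your proof is correct and complete. Sum-freeness along the last coordinate gives exactly $(A+B)\cap C\cap[0,n]=\emptyset$. The count with $a_0=\min A$ then yields $2(n+1)$ exactly: translating $B'$ into $[0,n]$ bounds $|C|$, and the overflow $B\setminus B'\subseteq(n-a_0,n]$ is charged against the gap $[0,a_0)$ that $A$ misses. Nothing requires $x,y,z$ to be distinct, so the degenerate case $x=y=z=0$ is covered too. Only $A\neq\emptyset$ is actually needed, not $B\neq\emptyset$. The paper does not prove this lemma itself; it imports it from \cite{LS23}, so there is no in-paper argument to compare against. Your single-translate argument is a valid self-contained replacement for that citation.
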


Finding an upper bound for the size of a sum-free set $S$ then amounts to solving the linear program
\begin{align*}
    \text{maximize } & \sum_{x\in [n+1]^{d-1}} |S(x)| \\
    \text{subject to } & |S(x)|+|S(y)|+|S(z)|\leq 2(n+1) \text{ for all } x,y,z\in [n+1]^{d-1} \text{ with } x+y=z, \\
    & 0\leq |S(x)|\leq n+1 \text{ for all } x\in [n+1]^{d-1}.
\end{align*}

An upper bound for the optimal value is given by a feasible solution to the dual linear program. We state this as follows for motivation, but it is not used in the proof: 
\begin{align*}
    \text{minimize } & 2(n+1) \sum_{x,y,z} w(x,y,z) + (n+1) \sum_x b(x) \\
    \text{subject to } & \sum_{y,z: x+y=z} w(x,y,z) + \sum_{y,z: y+x=z} w(y,x,z) \\
    & \qquad + \sum_{y,z: y+z=x} w(y,z,x) + b(x) \geq 1 \text{ for all } x\in [n+1]^{d-1}, \\
    & w(x,y,z)\geq 0 \text{ for all } x,y,z\in [n+1]^{d-1} \text{ with } x+y=z, \\
    & b(x)\geq 0 \text{ for all } x\in [n+1]^{d-1}.
\end{align*}

As observed by Lepsveridze and Sun, the existence of joint mixability of the distributions 
\[U(L_{d}^\RR(u_d^*)),\ U(L_{d}^\RR(u_d^*)),\ U(L_{d}^\RR(d-2u_d^*))\]
gives a feasible weight function $w$ for the dual program, resulting in the optimal bound for the primal program as we will see later. 

Let $m=\floor{u_d^* n}$, and consider the partition of the cube $[n+1]^{d-1}$ into the following five sets.
\begin{align*}
    \cA & := \setcond{x\in [n+1]^{d-1}}{\ones^T x \in [0,m-n)}, \\
    \cB & := \setcond{x\in [n+1]^{d-1}}{\ones^T x \in [m-n,m]}, \\
    \cC & := \setcond{x\in [n+1]^{d-1}}{\ones^T x \in (m,2m-n)}, \\
    \cD & := \setcond{x\in [n+1]^{d-1}}{\ones^T x \in [2m-n,2m]}, \\
    \cE & := \setcond{x\in [n+1]^{d-1}}{\ones^T x \in (2m,(d-1)n]}.
\end{align*}
These sets are non-empty for $d\geq 3$, with the exception of the case $d=3$, where $\cE=\emptyset$.

\begin{lem} \label{lem:layer-estimate}
    For $d\geq 3$, the following estimates hold for sufficiently large $n$.
    \begin{enumerate}
        \item Let $r\in [0,m-n)$ and $s\in [\frac{(d-1)n}{2},2m-n)$. Then $|L_{d-1}(s)|\geq 2|L_{d-1}(r)|$.
        \item Suppose $d\geq 4$. Let $r\in [0,m-n)$, $s\in (m,\frac{(d-1)n}{2}]$, and $t\in (2m,(d-1)n]$. Then $|L_{d-1}(s)|\geq 2|L_{d-1}(r)|+4|L_{d-1}(t)|$.
    \end{enumerate}
\end{lem}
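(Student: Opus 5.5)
We pass from the discrete layer sizes to the continuous slice volumes, then use unimodality of $f_{d-1}$ to reduce each inequality to the worst-case endpoints. Those endpoint inequalities are exactly Lemma~\ref{lem:real-layer-estimate}, whose strict constants ($2.01$, $0.99$) absorb the discretisation errors.

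\textbf{Discrete-to-continuous step.} For $0\le r\le (d-1)n$ we have
\[
|L_{d-1}(r)| = n^{d-2}f_{d-1}(r/n) + O(n^{d-3})
\]
uniformly in $r$. This holds because $|L_{d-1}(r)|$ counts lattice points of $[n+1]^{d-1}$ on a hyperplane slice, and $f_{d-1}(x)$ is the normalised $(d-2)$-volume of that slice; $f_{d-1}$ is a Lipschitz piecewise polynomial when $d-1\ge 2$. Alternatively one can use \eqref{eq:N-sum} inductively, comparing $N_{d-1}(r)=\sum_i N_{d-2}(r-i)$ with the convolution $f_{d-1}=f_{d-2}*\ones_{[0,1]}$. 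Also $m/n = u_d^*+O(1/n)$.

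\textbf{Reduction to endpoints.} Since $f_{d-1}$ is symmetric about $(d-1)/2$ and unimodal (log-concave), we can replace each range by its worst case.

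For (1): as $r$ ranges over $[0,m-n)$, the value $r/n$ stays at most $u_d^*-1$, which lies below the mode. So $f_{d-1}(r/n)\le f_{d-1}(u_d^*-1)+O(1/n)$. As $s$ ranges over $[\tfrac{(d-1)n}{2},2m-n)$, the value $s/n$ lies between the mode and $2u_d^*-1$. So $f_{d-1}(s/n)\ge f_{d-1}(2u_d^*-1)-O(1/n)$. Here we need $2u_d^*-1\ge (d-1)/2$, which follows from $u_d^*>d/4$, shown in the text.

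For (2): $s/n\in(u_d^*,(d-1)/2]$ gives $f_{d-1}(s/n)\ge f_{d-1}(u_d^*)-O(1/n)$. We need $u_d^*\le (d-1)/2$; this holds since $u_d^*\approx d/3+0.1$, and for small $d$ it can be read off the table (for $d=4$, $1.448\le 1.5$). Next, $t/n\in(2u_d^*,d-1]$ lies above the mode, so $f_{d-1}(t/n)\le f_{d-1}(2u_d^*)+O(1/n)$. Finally, $r$ is handled as in (1).

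\textbf{Conclusion.} Lemma~\ref{lem:real-layer-estimate} gives
\[
f_{d-1}(2u_d^*-1)\ge 2.01\,f_{d-1}(u_d^*-1),\qquad 0.99\,f_{d-1}(u_d^*)\ge 2f_{d-1}(u_d^*-1)+4f_{d-1}(2u_d^*).
\]
The main point to check is that both margins are positive, i.e.\ $f_{d-1}(u_d^*-1)>0$ in (1) and $f_{d-1}(u_d^*)>0$ in (2). These hold because $u_d^*>1$ for $d\ge3$ and $u_d^*$ lies in the interior of the support. The gain of $0.01$ times a positive constant times $n^{d-2}$ then dominates the $O(n^{d-3})$ errors for large $n$. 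Multiplying through by $n^{d-2}$ yields (1) and (2).

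\textbf{Edge cases.} If $u_d^*-1$ is close to $0$, then $f_{d-1}(u_d^*-1)$ may be small, but the inequality only improves, since the needed slack comes from the left-hand side being positive. For $d=3$, $f_2$ is piecewise linear and $u_3^*-1\approx0.11$, so everything remains explicit.
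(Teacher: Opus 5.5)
Your proposal is correct and follows the paper's own argument. You replace $|L_{d-1}(\cdot)|$ by $n^{d-2}f_{d-1}(\cdot/n)$ up to a uniform $O(n^{d-3})$ error, use unimodality of $f_{d-1}$ to reduce to the endpoint values, and let the strict constants $2.01$ and $0.99$ in Lemma~\ref{lem:real-layer-estimate} absorb the error for large $n$.

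One small slip: $u_d^*>d/4$ only gives $2u_d^*-1>(d-2)/2$, not $2u_d^*-1\ge (d-1)/2$. That condition is not actually needed, though, because if it failed the range of $s$ in (1) would be empty and (1) would hold vacuously.
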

\begin{proof}
    By Lemma~\ref{lem:real-layer-estimate} and unimodality of $f_{d-1}$, we have for $r\in [0,m-n)$ and $s\in [\frac{(d-1)n}{2},2m-n)$ that
    \begin{align*}
        \Vol_{d-2}(L_{d-1}^{\RR}(s/n)) &\geq \Vol_{d-2}(L_{d-1}^{\RR}(2u_d^*-1))\\
        &\geq 2.01\Vol_{d-2}(L_{d-1}^{\RR}(u_d^*-1))\\
        &\geq 2.01 \Vol_{d-2}(L_{d-1}^{\RR}(r/n)).
    \end{align*}
    Since $|L_{d-1}(x)|=n^{d-2}\Vol_{d-2}(L_{d-1}^{\RR}(x/n))+O(n^{d-3})$, this implies (1) for sufficiently large $n$. A similar argument gives (2).
\end{proof}

Let $\Omega:=\setcond{(x,y,z)\in (\cB\times \cB\times \cD)\cup (\cA\times \cC\times \cC)\cup (\cC\times \cC\times \cE)}{x+y=z}$.
\begin{lem} \label{lem:weight}
    For $d\geq 3$, there exists a weight function $w(x,y,z)$ supported on $\Omega$ satisfying the following. For $x\in [n+1]^{d-1}$, let $W(x) := \sum_{y,z} w(x,y,z) + \sum_{y,z} w(y,x,z) + \sum_{y,z} w(y,z,x)$.
    \begin{enumerate}
        \item $w(x,y,z)\geq 0$ for all $(x,y,z)\in \Omega$.
        \item For every $x\in \cC$, we have $W(x)\leq 1$.
        \item We have
        \[\sum_{x\in \cA\cup \cB\cup \cD\cup \cE} |W(x)-1| = O(n^{d-2}).\]
    \end{enumerate}
\end{lem}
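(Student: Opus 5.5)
The plan is to build $w$ as a sum of three nonnegative pieces, one for each of the three product types in $\Omega$. Each piece is obtained by pushing forward a coupling from Corollary~\ref{cor:joint-mix-sum}, in dimension $d$ for the main piece and in dimension $d-1$ for the two correction pieces. Conditions (1)--(3) then reduce to one-dimensional bookkeeping over layer indices, and those inequalities are supplied by Lemma~\ref{lem:layer-estimate}.

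\emph{Main piece on $\cB\times\cB\times\cD$.} Apply Corollary~\ref{cor:joint-mix-sum} in dimension $d$ with $r=s=m$. This gives $(X,Y,Z)$ with $X,Y\sim U(L_d(m))$, $Z\sim U(L_d(2m))$ and $X+Y=Z$. Since the last coordinate is determined by the others, $\pi$ is a bijection from $L_d(m)$ onto $\{x\in[n+1]^{d-1}: \ones^Tx\in[m-n,m]\}=\cB$. Likewise $\pi$ maps $L_d(2m)$ bijectively onto $\cD$. Hence $(\pi X,\pi Y,\pi Z)$ is a coupling with $\pi X,\pi Y\sim U(\cB)$, $\pi Z\sim U(\cD)$ and $\pi X+\pi Y=\pi Z$. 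Set
\[w_1(x,y,z)=|\cD|\,\PP(\pi X=x,\pi Y=y,\pi Z=z).\]
Then every $z\in\cD$ has load exactly $1$, and every $x\in\cB$ has load $2|\cD|/|\cB|=2N_d(2m)/N_d(m)$. Because $m=\floor{u_d^*n}$ and $f_d(u_d^*)=2f_d(2u_d^*)$, we have $N_d(m)-2N_d(2m)=O(n^{d-2})$. Therefore $\sum_{x\in\cB}|W(x)-1|=O(n^{d-2})$, and this piece does not touch $\cA,\cC,\cE$.

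\emph{Correction pieces on $\cA\times\cC\times\cC$ and $\cC\times\cC\times\cE$.} For each layer index $r\in[0,m-n)$ I would choose a probability vector $\beta_r(\cdot)$ on layers $s$ with $s>m$ and $r+s\in[\tfrac{(d-1)n}{2},2m-n)$. For each such pair, apply Corollary~\ref{cor:joint-mix-sum} in dimension $d-1$ to get a coupling on $L_{d-1}(r)\times L_{d-1}(s)\times L_{d-1}(r+s)$. Weight it by $|L_{d-1}(r)|\beta_r(s)$. Then each $x\in\cA$ has load exactly $1$, while a point of $C$-layer $s$ or $r+s$ receives load
\[\frac{|L_{d-1}(r)|\beta_r(s)}{|L_{d-1}(s)|}\quad\text{or}\quad\frac{|L_{d-1}(r)|\beta_r(s)}{|L_{d-1}(r+s)|}\]
respectively. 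For $\cE$ (only when $d\ge4$), for each $t\in(2m,(d-1)n]$ split $t=c+c'$ with $c,c'\in(m,\tfrac{(d-1)n}{2}]$, again spread by a probability vector, and use the same corollary with weight $|L_{d-1}(t)|$. Then each $x\in\cE$ has load $1$, and the lower-half $\cC$ layers receive at most twice the relative mass. Lemma~\ref{lem:layer-estimate} is exactly set up for this. Part (1) bounds the load on upper-half $\cC$ layers, which appear only as $z$ of an $\cA\times\cC\times\cC$ triple, by roughly $|L_{d-1}(r)|/|L_{d-1}(s)|\le 1/2$. Part (2) bounds the load on a lower-half layer $s$, which can be hit once as $y$ from $\cA$ and up to twice from $\cE$, by $(|L(r)|+2|L(t)|)/|L(s)|\le 1/2$ or so. For $d=3$ we have $\cE=\emptyset$, and the whole of $\cC$ lies above $(d-1)n/2=n$, so only part (1) is needed.

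\emph{Main obstacle.} The delicate step is choosing the distributions $\beta_r$ (and the analogous splittings for $\cE$) so that the \emph{accumulated} load on each $\cC$ layer stays $\le1$. Many $\cA$-layers, and many $\cE$-layers, must share relatively few $\cC$-layers. A fixed injective matching $r\mapsto s_r$ does not fit in the available range, and naive uniform spreading gives a logarithmically divergent load near $r\approx m-n$. What I would try first is to exploit that $|L_{d-1}(r)|$ decays geometrically as $r$ moves away from $d/3\cdot n$ downward, with ratio per unit step of order $e^{\sigma}$ in the scaling of Lemma~\ref{lem:real-layer-estimate}. I would assign $r$ to $s$ and $r+s$ via a fixed shift, so that each $\cC$ layer receives contributions from a geometrically weighted family of $\cA$ layers. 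I would then check, using the log-concavity ratio bound \eqref{eq:fd-ratio-bound} and the slack $2.01$ versus $2$ and $0.99$ versus $1$ in Lemma~\ref{lem:real-layer-estimate}, that the resulting geometric sums stay below $1$. Any leftover boundary effect within $O(1)$ layers of $m-n$, $m$, $2m-n$, $2m$ only affects $O(n^{d-2})$ points, and can be absorbed into condition (3).
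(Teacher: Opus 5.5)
\textbf{Gap: condition (2) is never established.} Your $w_1$ is exactly the paper's. Your architecture for the other two pieces also matches the paper: push forward couplings from Corollary~\ref{cor:joint-mix-sum} in dimension $d-1$, weighted by $|L_{d-1}(r)|$ (for $\cA$) or $|L_{d-1}(t)|$ (for $\cE$), so that points of $\cA$ and $\cE$ get load exactly $1$. But condition (2) is the real content of the lemma, and you never fix $\beta_r$ or the $\cE$-splittings; the geometric-sum plan is left unverified.

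Your bookkeeping also misreads Lemma~\ref{lem:layer-estimate}. You count a lower-half $\cC$-layer as hit once from $\cA$ and twice from $\cE$, against a budget of $1/2$ per hit. That contradicts your own (correct) remark that no injective matching fits. $\cA$ has about $m-n$ layers, each needing two $\cC$-slots ($y$ and $z$), while $\cC$ has only about $m-n$ layers. So multiplicity $2$ from $\cA$ is forced, and the midpoint split of $\cE$ gives multiplicity $4$. The constants $2$ and $4$ in Lemma~\ref{lem:layer-estimate} are exactly these multiplicities. Finally, the decay of $|L_{d-1}(r)|$ lives on scale $n$: one rescaled unit is $n$ layers. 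So there is no useful geometric series over nearby layers.

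\textbf{The missing idea: a deterministic $2$-to-$1$ layer assignment.} With it, no summation is needed.
\begin{itemize}
\item For $1\le r<m-n-1$ put $s=\floor{(3m-n-r)/2}$ and $t=r+s$. Each time $r$ drops by one, exactly one of $s$ (up) or $t$ (down) moves by one, alternately. So $s$ sweeps up from $m+1$ and $t$ sweeps down from $2m-n-1$ through disjoint ranges, and every $\cC$-layer is used at most twice.
\item In this scheme $y$-layers do enter the upper half of $\cC$, so drop your claim that upper-half layers occur only as $z$. For $d=4$ the lower half $(m,\tfrac{3n}{2}]$ is only about $0.05n$ layers thick, against about $0.45n$ layers of $\cA$.
\item For $\cE$, use $t\mapsto(\floor{t/2},\ceil{t/2})$ for $2m+2\le t\le (d-1)n$. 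This uses each layer of $(m,\tfrac{(d-1)n}{2}]$ at most four times.
\end{itemize}
Lemma~\ref{lem:layer-estimate} compares \emph{every} $\cA$-layer (and every $\cE$-layer) with \emph{every} relevant $\cC$-layer. Hence part (1) gives $W\le 1$ on the upper half of $\cC$, and part (2) gives $W_2+W_3\le 1$ on the lower half.

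The unserved layers $r=0$, $r=m-n-1$ and $t=2m+1$ have $W=0$ and cost only $O(n^{d-2})$ in condition (3). That is where boundary effects must go, because condition (2) tolerates no error on $\cC$.
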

\begin{proof}
    We will split $w$ into three parts $w=w_1+w_2+w_3$, where $w_1,w_2,w_3$ are supported on $\cB\times \cB\times \cD$, $\cA\times \cC\times \cC$, and $\cC\times \cC\times \cE$ respectively. Analogously define $W_1,W_2,W_3$ for $w_1,w_2,w_3$.

    \noindent\\
    \underline{Construction of $w_1$}. By Corollary~\ref{cor:joint-mix-sum}, there exists a coupling $(X_1,Y_1,Z_1)$ on $L_{d}(m)\times L_{d}(m)\times L_{d}(2m)$ with $X_1+Y_1=Z_1$, with marginals $X_1\sim U(L_{d}(m))$, $Y_1\sim U(L_{d}(m))$, and $Z_1\sim U(L_{d}(2m))$. Under the map $\pi$, we obtain a coupling $(\pi(X_1),\pi(Y_1),\pi(Z_1))$ on $\cB\times \cB\times \cD$ with $\pi(X_1)+\pi(Y_1)=\pi(Z_1)$, where the laws of $\pi(X_1)$ and $\pi(Y_1)$ are uniform on $\cB$, and the law of $\pi(Z_1)$ is uniform on $\cD$. 

    Recall that $\Vol_{d-1}(L_d^{\RR}(u_d^*))=2\Vol_{d-1}(L_d^{\RR}(2u_d^*))$. Thus, since $m=\floor{u_d^* n}$, we have 
    \[|L_{d}(m)|=n^{d-1}\Vol_{d-1}(L_d^{\RR}(u_d^*))+O(n^{d-2}),\quad |L_{d}(2m)|=n^{d-1}\Vol_{d-1}(L_d^{\RR}(2u_d^*))+O(n^{d-2}).\]
    Hence, we have $|\cB|=|L_{d}(m)|=2|L_{d}(2m)|+O(n^{d-2})=2|\cD|+O(n^{d-2})$. Define $w_1$ by
    \[w_1(x,y,z) := |\cD| \cdot \PP((\pi(X_1), \pi(Y_1), \pi(Z_1))=(x,y,z)).\]
    Since $\pi(X_1),\pi(Y_1)$ are uniformly distributed on $\cB$, we have $W_1(x) = 2|\cD|/|\cB|=1+O(n^{-1})$ for $x\in \cB$, since $|\cB|=\Theta(n^{d-1})$. Since $\pi(Z_1)$ is uniformly distributed on $\cD$, we have $W_1(x) = |\cD|/|\cD| = 1$ for $x\in \cD$. Furthermore, $W_1(x)=0$ everywhere else. 

    \noindent\\
    \underline{Construction of $w_2$}. Let $1\leq r < m-n-1$ and
    \[s=\floor{\frac{3m-n-r}{2}},\quad t=r+s.\]
    In other words, as $r$ ranges from $m-n-2$ down to $1$, $s$ ranges from $m+1$ to $m+\floor{\frac{m-n-1}{2}}$ with each value attained at most twice, and $t$ ranges from $2m-n-1$ down to $m+\floor{\frac{m-n+1}{2}}$ with each value attained at most twice. Therefore, each value in $[m+1,2m-n-1]$ is attained at most twice by $s,t$ combined. 
    
    By Corollary~\ref{cor:joint-mix-sum}, for each $r$, there exists a coupling $(X_2^{(r)},Y_2^{(r)},Z_2^{(r)})$ on $L_{d-1}(r)\times L_{d-1}(s)\times L_{d-1}(t)$ with $X_2^{(r)}+Y_2^{(r)}=Z_2^{(r)}$, with uniform marginals. Define $w_2$ by
    \[w_2(x,y,z) := |L_{d-1}(\ones^T x)|\cdot \PP((X_2^{(\ones^T x)},Y_2^{(\ones^T x)},Z_2^{(\ones^T x)})=(x,y,z)),\]
    which is supported on $\cA\times \cC\times \cC$. For $x\in \cA$ with $0< \ones^T x < m-n-1$, we have
    \[W_2(x) = |L_{d-1}(\ones^T x)| \cdot \PP(X_2^{(\ones^T x)}=x) = 1.\]
    For $x\in \cC$, $\ones^T x$ is attained by $s$ or $t$ at most twice. So there are at most two values of $r$, say $r_1,r_2$, such that $x$ is in the support of $Y_2^{(r_i)}$ or $Z_2^{(r_i)}$. For each such $r_i$, we have $\PP(Y_2^{(r_i)}=x)$ or $\PP(Z_2^{(r_i)}=x) = 1/|L_{d-1}(\ones^T x)|$. Thus, for $x\in \cC$, we have
    \[W_2(x) = \sum_i \frac{|L_{d-1}(r_i)|}{|L_{d-1}(\ones^T x)|}.\]

    \noindent\\
    \underline{Construction of $w_3$}. Note that for $d=3$, $\cE=\emptyset$ so there is nothing to construct. Otherwise, assume $d\geq 4$. This is similar to the construction of $w_2$. Let $2m+2\leq t\leq (d-1)n$ and
    \[r=\floor{\frac{t}{2}},\quad s=t-r=\ceil{\frac{t}{2}}.\]
    In other words, as $t$ ranges from $2m+2$ to $(d-1)n$, $r$ ranges from $m+1$ to $\floor{(d-1)n/2}$ with each value attained at most twice, and $s$ ranges from $m+1$ to $\ceil{(d-1)n/2}$ with each value attained at most twice. Therefore, each value in $[m+1,\ceil{(d-1)n/2}]$ is attained at most four times by $r,s$ combined. We will assume $n$ is even for simplicity.

    By Corollary~\ref{cor:joint-mix-sum}, for each $t$, there exists a coupling $(X_3^{(t)},Y_3^{(t)},Z_3^{(t)})$ on $L_{d-1}(r)\times L_{d-1}(s)\times L_{d-1}(t)$ with $X_3^{(t)}+Y_3^{(t)}=Z_3^{(t)}$, with uniform marginals. Define $w_3$ by
    \[w_3(x,y,z) := |L_{d-1}(\ones^T z)|\cdot \PP((X_3^{(\ones^T z)},Y_3^{(\ones^T z)},Z_3^{(\ones^T z)})=(x,y,z)).\]
    Split $\cC$ into two parts
    \begin{align*}
        \cC_1 & := \setcond{x\in \cC}{\ones^T x \in (m,\tfrac{(d-1)n}{2}]}, \\
        \cC_2 & := \setcond{x\in \cC}{\ones^T x \in (\tfrac{(d-1)n}{2},2m-n)}.
    \end{align*}
    Then $w_3$ is supported on $\cC_1\times \cC_1\times \cE$. For $x\in \cE$ with $\ones^T x > 2m+1$, we have
    \[W_3(x) = |L_{d-1}(\ones^T x)| \cdot \PP(Z_3^{(\ones^T x)}=x) = 1.\]
    For $x\in \cC_1$, $\ones^T x$ is attained by $r$ or $s$ at most four times, say when $t=t_1,\ldots,t_4$. Thus, for $x\in \cC_1$, we have
    \[W_3(x) = \sum_i \frac{|L_{d-1}(t_i)|}{|L_{d-1}(\ones^T x)|}.\]

    Combining the above, we have $w=w_1+w_2+w_3$ and $W=W_1+W_2+W_3$. Clearly $w$ is supported on $\Omega$ and $w(x,y,z)\geq 0$ for all $(x,y,z)\in \Omega$, giving condition (1) in the lemma.

    For $x\in \cA$ with $0<\ones^T x < m-n-1$, we have $W(x) = W_2(x) = 1$. If $\ones^T x = 0$ or $m-n-1$, then $W(x)=0$. 
    
    For $x\in \cB$, we have $W(x) = W_1(x) = 1+O(n^{-1})$. 
    
    For $x\in \cC_1$, we have
    \[W(x) = W_2(x)+W_3(x) = \frac{\sum_i |L_{d-1}(r_i)|+\sum_i |L_{d-1}(t_i)|}{|L_{d-1}(\ones^T x)|}\leq 1\]
    by Lemma~\ref{lem:layer-estimate}(2), since there are at most two different $r_i$, each in the range $(1,m-n-1)$, and there are at most four different $t_i$, each in the range $(m+1,(d-1)n]$.

    For $x\in \cC_2$, we have 
    \[W(x) = W_2(x) = \frac{\sum_i |L_{d-1}(r_i)|}{|L_{d-1}(\ones^T x)|}\leq 1\]
    by Lemma~\ref{lem:layer-estimate}(1). This verifies condition (2) of the lemma.

    For $x\in \cD$, we have $W(x) = W_1(x)=1$. 
    
    For $x\in \cE$ with $\ones^T x > 2m+1$, we have $W(x) = W_3(x) = 1$. If $\ones^T x = 2m+1$, then $W(x)=0$. Putting everything together, we have
    \begin{align*}
        \sum_{x\in \cA\cup \cB\cup \cD\cup \cE} |W(x)-1| & = O(n^{-1})|\cB| + \sum_{x:\ones^T x=0,m-n-1}1 + \sum_{x:\ones^T x=2m+1}1\\
        & = O(n^{d-2}) + |L_{d-1}(0)| + |L_{d-1}(m-n-1)| + |L_{d-1}(2m+1)|\\
        &= O(n^{d-2}),
    \end{align*}
    verifying condition (3) in the lemma as required.
\end{proof}

Let $S^* := \setcond{x\in [n+1]^d}{\ones^T x \in [u_d^* n, 2u_d^* n)}$. Then $S^*$ is sum-free, and $|S^*|=c_d^* n^d + O(n^{d-1})$. It was observed in \cite{LS23} that for $S^*$, the inequality \eqref{eq:fiber-bound} is essentially tight for every $(x,y,z)\in \Omega$. 
\begin{lem}[{\cite[Lemma~2.4]{LS23}}] \label{lem:fiber-tight}
    For $d\geq 3$ and every $(x,y,z)\in \Omega$, we have 
    \[|S^*(x)|+|S^*(y)|+|S^*(z)| \geq 2(n+1)-1.\]
    Furthermore, for every $x\in \cC$, we have $|S^*(x)|=n+1$.
\end{lem}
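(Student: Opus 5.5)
The statement is elementary, so the plan is to compute every fiber of $S^*$ explicitly and then check each of the three blocks of $\Omega$ in turn. Write $u=u_d^*$, $m=\floor{un}$, and for $x\in[n+1]^{d-1}$ put $a=\ones^T x$. Then
\[S^*(x)=\setcond{y\in[0,n]\cap\ZZ}{\ceil{un}-a\le y\le \ceil{2un}-1-a},\]
an integer interval. Its size is therefore
\[\max\paren{0,\ \min(n,\ceil{2un}-1-a)-\max(0,\ceil{un}-a)+1}.\]
Throughout I will use that $u>1$ for $d\ge 3$ (since $u_d^*>d/4$, or from the table). So $(u-1)n\to\infty$, and for large $n$ all comparisons such as $m+n<2un$ and $2m-n>un$ hold with room to spare. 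I will also use the rounding facts $m\le un<m+1$, $\ceil{un}\in\{m,m+1\}$ and $\ceil{2un}\le 2m+2$.

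First, for $x\in\cC$ we have $a\ge m+1>un$, so the lower constraint is $y\ge 0$. We also have $a+n<2m\le 2un$, so the upper constraint is $y\le n$. Hence $|S^*(x)|=n+1$, which is the second assertion.

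For $x\in\cA$ we have $a+n<m\le un$, so $S^*(x)=\emptyset$. Consequently every triple in $\cA\times\cC\times\cC$ gives the sum $0+(n+1)+(n+1)=2(n+1)$. Every triple in $\cC\times\cC\times\cE$ gives at least $2(n+1)+0$.

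The only real work is the block $\cB\times\cB\times\cD$, with $c=a+b$.
\begin{itemize}
\item For $x\in\cB$: since $a\le m$, we get $a+n\le un+n<2un$, so the upper constraint is $y\le n$. Since $a\ge m-n$, the lower endpoint $\ceil{un}-a$ is at most $n+1$. Hence $|S^*(x)|=n+1-\ceil{un}+a$.
\item For $z\in\cD$: since $c\ge 2m-n>un$ for large $n$, the lower constraint is $y\ge 0$. Hence $|S^*(z)|=\min(n+1,\ceil{2un}-c)$.
\end{itemize}
If the minimum is $\ceil{2un}-c$, the total is
\[2(n+1)+\ceil{2un}-2\ceil{un}\ge 2(n+1)-1,\]
because $2\ceil{un}\le\ceil{2un}+1$.

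The step I expect to be the only obstacle is the boundary case in which the minimum is $n+1$. There, $n+1\le \ceil{2un}-c\le 2m+2-c$ forces $c\le 2m+1-n$. The total then equals $3(n+1)-2\ceil{un}+c$, and we need $c\ge 2\ceil{un}-n-2$. This follows from $c\ge 2m-n$ whenever $\ceil{un}=m$. If instead $\ceil{un}=m+1$, I will separately check the two values $c\in\{2m-n,\,2m-n+1\}$ using the exact value of $\ceil{2un}$. For example, with $c=2m-n$ we have $\ceil{2un}\ge 2m+1$, and the direct computation again gives a total of at least $2n+1$.

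Finally, I will confirm that $n$ large (as in Lemma~\ref{lem:layer-estimate}) covers all the strict inequalities used above. No result beyond the definitions of $\cA,\dots,\cE$ and $u_d^*>1$ is needed.
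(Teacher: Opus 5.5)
Your proposal is correct. The paper gives no proof of this lemma; it quotes it from \cite[Lemma~2.4]{LS23}. Your description of each fibre $S^*(x)$ as an integer interval, followed by a block-by-block check of $\Omega$, is therefore a self-contained verification, and every step holds. This includes two facts you use without stating them: $\ceil{un}-a\ge 0$ on $\cB$, which follows from $a\le m\le \ceil{un}$, and $\ceil{2un}-c\ge 0$ on $\cD$, which follows from $c\le 2m\le \ceil{2un}$.

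\textbf{The case you flag as the obstacle is immediate.} When the minimum is $n+1$ you need $c\ge 2\ceil{un}-n-2$. Since $\ceil{un}\le m+1$, this follows from $c\ge 2m-n$ whether $\ceil{un}$ equals $m$ or $m+1$. So you do not need a separate check of $c\in\{2m-n,\,2m-n+1\}$.

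\textbf{Your ``large $n$'' hypothesis is genuinely necessary.} The lemma as stated in the paper omits it, and it fails for small $n$. Take $d=3$ and $n=1$. Then $m=1$ and $S^*=\setcond{x\in\{0,1\}^3}{\ones^T x=2}$. The triple $x=(0,0)\in\cB$, $y=z=(1,0)$ with $y\in\cB$ and $z\in\cD$ lies in $\Omega$. Its fibre sizes sum to $0+1+1=2<2(n+1)-1$.

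The failure occurs exactly at your step $2m-n\ge\ceil{un}$ for $z\in\cD$. That step holds once $(u_d^*-1)n\ge 1$. Since Theorem~\ref{thm:main} is asymptotic this costs nothing, but you should state the hypothesis explicitly.
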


With our weight function $w$, the proof of Theorem~\ref{thm:main} is identical to that of \cite{LS23}, which we rephrase here.

\begin{proof}[Proof of Theorem~\ref{thm:main}]
    As noted in the introduction, the cases $d=1,2$ are already known. Hence, we assume $d\geq 3$ in the proof. Let $S\subseteq [n+1]^d$ be sum-free. By Lemma~\ref{lem:fiber-bound} and Lemma~\ref{lem:fiber-tight}, we have
    \begin{equation} \label{eq:Sxyz}
        |S(x)|+|S(y)|+|S(z)| - (|S^*(x)|+|S^*(y)|+|S^*(z)|) \leq 1
    \end{equation}
    for every $(x,y,z)\in \Omega$. Furthermore, since $S^*(x)=n+1$ for all $x\in \cC$, we have 
    \begin{equation} \label{eq:Sx}
        |S(x)| - |S^*(x)| \leq 0 \text{ for all }x\in \cC.
    \end{equation}
    Let $w$ and $W$ be as in Lemma~\ref{lem:weight}. Taking $w(x,y,z)$ times \eqref{eq:Sxyz} over all $(x,y,z)\in \Omega$ and taking $1-W(x)$ times \eqref{eq:Sx} over all $x\in \cC$ and summing, we have
    \begin{align*}
        \sum_{x\in [n+1]^{d-1}} W(x)(|S(x)|-|S^*(x)|) + \sum_{x\in \cC} (1-W(x))(|S(x)|-|S^*(x)|) &\leq \frac13 \sum_{x\in [n+1]^{d-1}} W(x)\\
        &= O(n^{d-1}).
    \end{align*}
    The left-hand side is equal to
    \[\sum_{x\in \cA\cup \cB\cup \cD\cup \cE} W(x)(|S(x)|-|S^*(x)|) + \sum_{x\in \cC} (|S(x)|-|S^*(x)|).\]
    By Lemma~\ref{lem:weight}, the first sum is at most
    \begin{align*}
        & \sum_{x\in \cA\cup \cB\cup \cD\cup \cE} (|S(x)|-|S^*(x)|) + \sum_{x\in \cA\cup \cB\cup \cD\cup \cE} |W(x)-1| \cdot \abs{|S(x)|-|S^*(x)|}\\
        &= \sum_{x\in \cA\cup \cB\cup \cD\cup \cE} (|S(x)|-|S^*(x)|) + O(n^{d-2})\cdot O(n)\\
        &= \sum_{x\in \cA\cup \cB\cup \cD\cup \cE} (|S(x)|-|S^*(x)|) + O(n^{d-1}).
    \end{align*}
    Putting everything together, we have
    \[\sum_{x\in [n+1]^{d-1}} |S(x)|-|S^*(x)| \leq O(n^{d-1}),\]
    which implies that $|S| = \sum_x |S(x)| \leq \sum_x |S^*(x)| + O(n^{d-1}) = c_d^* n^d + O(n^{d-1})$, as required.
\end{proof}

\section{Concluding remarks}

Theorem~\ref{thm:main} implies that the largest sum-free subset of the hypercube $[0,1]^d\subset \RR^d$ is of the form $\setcond{x\in [0,1]^d}{1\leq L(x)<2}$, for some linear map $L:\RR^d\to \RR$. More generally, subsets of the form
\[\setcond{x\in [0,1]^d}{\floor{L(x)}\equiv 1\pmod{3}}\]
are also sum-free. Even more generally, for any set $K\subset \RR^d$ and one-dimensional sum-free set $T\subset \RR$, the subset
\begin{equation} \label{eq:sum-free-form}
    \setcond{x\in K}{L(x)\in T}
\end{equation}
is sum-free. It is natural to ask whether such subsets are optimal for convex shapes $K$ besides the hypercube. The following example suggests that this is not the case for large $d$.

\begin{prop}
For sufficiently large $d$, there exist a bounded convex set $K\subset \RR^d$ not containing the origin and a sum-free subset $S\subseteq K$ such that for any linear map $L:\RR^d\to \RR$, we have 
\[\Vol(S) > \Vol(\setcond{x\in K}{\floor{L(x)}\equiv 1\pmod{3}}).\]
\end{prop}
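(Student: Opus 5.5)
The plan is to use a sum-free set that is not a level set of a single linear functional. It comes from the following observation. Let $\ell_1,\dots,\ell_d$ be the coordinate functionals. The set
\[S:=\setcond{x\in K}{\min_i x_i\in[1,2)}\]
is sum-free for any $K\subseteq\RR^d_{\ge 0}$. Indeed, if $x,y\in S$ then $\min_i(x+y)_i\geq \min_i x_i+\min_i y_i\geq 2$, so $x+y\notin S$. Unlike the slices in \eqref{eq:sum-free-form}, this set exploits all $d$ directions at once. The aim is to choose a convex body $K$ not containing the origin on which $S$ has almost full relative volume, while every single-functional set $\setcond{x\in K}{\floor{L(x)}\equiv 1\pmod 3}$ loses a fixed positive fraction of $K$, uniformly in $L$.

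I would first try the cube $K=[1,3]^d$. Then $S=K\setminus[2,3]^d$, so $\Vol(S)/\Vol(K)=1-2^{-d}$. This step is routine.

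The substance is to show that for every linear $L$ the set $T_L:=\setcond{x\in K}{\floor{L(x)}\equiv1\pmod3}$ satisfies $\Vol(T_L)\le(1-c)\Vol(K)$ for some absolute $c>0$. I would do this as follows.
\begin{enumerate}
\item Let $X$ be uniform on $K$. By Fact~\ref{fact:log-concave-add}, in its Pr\'ekopa--Leindler form, $L(X)$ has a log-concave density, so the problem becomes one-dimensional.
\item Use the relation $\EE[L(X)]=2L(\ones)$ together with the structure of the cube. Pair $X$ with a ``doubled'' point: if $X,X'$ are independent uniform on $[1,3/2]^d$ then $X+X'\in[2,3]^d\subseteq K$. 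Equivalently, use the exchangeable coupling $X=\ones+U$, $U$ uniform on $[0,2]^d$, and compare $L(X)$ with $L(\ones+U/2)$ and $L(2\ones+U/2)$.
\item Argue that if $L(X)$ lies in $\bigcup_k[3k+1,3k+2)$ with probability close to $1$, then log-concavity forces $L(X)$ to concentrate on a single window $[3k+1,3k+2)$. For a concentrated log-concave variable the variance is small compared to $1$, by Lemma~\ref{lem:unimodal}-type bounds.
\item Show that concentration in one window of length $1$ contradicts the spread of $L$ over $K$. Since $\Var L(X)=\tfrac13\sum_i L_i^2$ while $\EE L(X)=2\sum_i L_i$, a window of length $1$ can only capture almost all of the mass if $L$ is nearly constant on $K$. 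Then $L(\ones)$ must be near $\tfrac12\cdot\tfrac{3k+3/2}{1}$ and near the window simultaneously, which should fail for the integer window structure once $k$ is fixed.
\end{enumerate}

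The main obstacle is exactly this uniform-in-$L$ estimate. The dangerous regime is $L=\lambda\sum_i x_i$ with $\lambda$ chosen so that $\lambda\sum_i x_i$ concentrates, by the thin-shell effect, inside one window $[3k+1,3k+2)$. On $[1,3]^d$ this actually happens, since $\sum_i x_i$ ranges over $[2d-O(\sqrt d),2d+O(\sqrt d)]$ with high probability. So the naive cube fails, and I expect to have to modify $K$.

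The fix I would try first is to destroy concentration of every linear functional by taking $K$ to be a long thin cylinder or cone in the direction $\ones$. For example, take $K$ to be the convex hull of $[1,1+\varepsilon]^d$ and $[M,M+\varepsilon]^d$ for large $M$. Two things then have to be checked:
\begin{itemize}
\item The set $S$ keeps a constant fraction of $K$. Along the axis, $\min_i x_i\in[1,2)$ covers the initial segment, which has volume-weight comparable to that of $K$ when the cone opens suitably.
\item Every $T_L$ is at most a $\tfrac13+o(1)$-ish fraction, because $L(X)$ is now spread over many windows. This is an equidistribution statement for a log-concave variable with large variance.
\end{itemize}
Finally, I would tune the opening so that the first fraction exceeds the second, and take $d$ large enough for the thin-shell estimates to become effective.
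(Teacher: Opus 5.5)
There is a genuine gap: the proposal never exhibits a convex body $K$ that works, and the one it finally settles on fails. Your set $S=\setcond{x\in K}{\min_i x_i\in[1,2)}$ is indeed sum-free. It equals $C\setminus 2C$ for the convex set $C=\ones+\RR^d_{\ge0}$, and for $K\subseteq C$ it is contained in $K\setminus 2K$, which is sum-free for every convex $K$ and is what the paper uses. So everything rests on the choice of $K$.

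You are right to drop the cube, but the reason is a comparison of two small losses, not a constant-fraction loss. On $[1,3]^d$ your $S$ misses exactly a $2^{-d}$ fraction. Meanwhile $T_L$ for $L=\frac{3}{4d}\sum_i x_i$, namely $\setcond{x\in K}{4d/3\le\sum_i x_i<8d/3}$, misses at most $2(e/6)^d<2^{-d}$ for $d\ge 8$.

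The tube $K=[1,1+\varepsilon]^d+[0,M-1]\ones$ is worse. Every $x\in K$ has $\max_i x_i-\min_i x_i\le\varepsilon$. Hence $\min_i x_i$ is within $\varepsilon$ of the linear function $\bar x=\frac1d\sum_i x_i$, and $S$ agrees with the linear strip $\setcond{x\in K}{1\le\bar x<2}$ outside an $O(\varepsilon)$-slab. For large $M$, $S$ is only about a $1/(M-1)$ fraction of $K$. By contrast, for $L=\frac{2}{M+\varepsilon}\bar x$ the set $T_L=\setcond{x\in K}{(M+\varepsilon)/2\le\bar x<M+\varepsilon}$ is about half of $K$. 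This also refutes your second bullet: $L$ can be rescaled so that half the tube lands in a single window, so no bound like $\frac13+o(1)$ can hold uniformly in $L$. Opening the cone towards the far end only moves more volume out of $\setcond{x}{\min_i x_i<2}$.

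What is missing is a geometry in which both competitors lose only a small amount, with the comparison made at that scale. The paper's construction is as follows:
\begin{itemize}
\item Take a regular $(d-1)$-simplex $P_1$ of volume $1$ in $\{x_1=1-\varepsilon\}$ and a translate $P_2$ of $-P_1$ in $\{x_1=2+\varepsilon\}$.
\item Set $K=\mathrm{conv}(P_1\cup P_2)$ and $S=K\setminus 2K$.
\item The strip $\{1\le x_1<2\}$ already captures all but $O_d(\varepsilon)$ of $K$. So competitors meeting two windows are excluded by convexity, since they lose a $c_d$-fraction.
\item For a single strip the contest happens at scale $\varepsilon$. $K\cap 2K$ is a slab of width $O(\varepsilon)$ whose cross-section is roughly $(-P)\cap 2P$, which is exponentially small in $d$.
\item Any strip $\{1\le L<2\}$ misses, near one end of $K$, the part of $P_1$ or $P_2$ on one side of a hyperplane through its centroid. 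By Gr\"unbaum's inequality this has volume at least $c\varepsilon$ with $c$ absolute.
\end{itemize}
The twist between $P$ and $-P$ is what makes $K\cap 2K$ tiny while preventing any hyperplane from trimming both ends cheaply. Nothing in your plan plays this role.
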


\begin{proof}[Proof sketch]
    Let $\varepsilon>0$ be sufficiently small. Let $P\subset \RR^{d-1}$ be a regular $(d-1)$-dimensional simplex of volume 1, centred at the origin. Let $P_1$ be a copy of $P$ placed on the hyperplane $\set{x_1=1-\varepsilon}$, and let $P_2$ be a translate of $-P_1$, placed on the hyperplane $\set{x_1=2+\varepsilon}$. Let $K$ be the convex hull of $P_1\cup P_2$, then $K$ is a bounded convex set not containing the origin, and let $S=K\setminus 2K$, which is sum-free. 

    Let $L$ be a linear map. Observe that the sum-free subset $\setcond{x\in K}{1\leq x_1<2}$ has volume $(1-O_d(\varepsilon))\Vol(K)$. Thus, for the subset corresponding to $L$ to beat this, it suffices to consider the set $T=\setcond{x\in K}{1\leq L(x)<2}$ bounded by a single strip. Indeed, if $K$ intersects more than one of the strips in $\setcond{x\in \RR^d}{\floor{L(x)}\equiv 1\pmod{3}}$, say it intersects $\set{1\leq L(x)<2}$ and $\set{4\leq L(x)<5}$, then by the convexity of $K$, there exists a constant $c_d>0$ such that
    \[\Vol(\setcond{x\in K}{2\leq L(x)<4}) \geq c_d \Vol(\setcond{x\in K}{1\leq L(x)<2 \text{ or } 4\leq L(x)<5}).\]
    It follows then that $\Vol(\setcond{x\in K}{\floor{L(x)}\not\equiv 1\pmod{3}}) \geq c_d\Vol(\setcond{x\in K}{\floor{L(x)}\equiv 1\pmod{3}})$.
    
    We will show that $\Vol(S) > \Vol(T)$, or equivalently, $\Vol(K\setminus S) < \Vol(K\setminus T)$. On one hand, $K\setminus S=K\cap 2K$, whose volume is approximately $2\varepsilon \Vol(P\cap (-2P))$ for small $\varepsilon$. It can be shown that $\Vol(P\cap (-2P))$ is exponentially small in $d$. 
    
    On the other hand, suppose without loss of generality that $L(x)=a_1x_1+a_2x_2$, with $a_1,a_2\geq 0$. If $a_1\leq 1$, then $K\setminus T$ contains the set $\setcond{x\in K}{L(x)<1}\supseteq \setcond{x\in K}{x_1<1,x_2<0}$. The volume of the latter set is approximately $\varepsilon \Vol(\setcond{x\in P_1}{x_2<0})$. Gr\"unbaum's inequality~\cite{G60} says that 
    \[\Vol(\setcond{x\in P_1}{x_2<0})\geq \paren{\frac{d-1}{d}}^{d-1}\Vol(P_1)>c\Vol(P_1)\]
    for some absolute constant $c>0$, for any convex body $P_1\subset \RR^{d-1}$ with centre of mass at the origin. If $a_1>1$, then $K\setminus T$ contains the set $\setcond{x\in K}{L(x)\geq 2}\supseteq \setcond{x\in K}{x_1>2,x_2>0}$, whose volume is approximately $\varepsilon \Vol(\setcond{x\in P_2}{x_2>0})>c\varepsilon$. In either case, we have $\frac{1}{\varepsilon}\Vol(K\setminus T)>c$ is bounded below by an absolute constant, while $\frac{1}{\varepsilon}\Vol(K\setminus S)$ is exponentially small in $d$. Thus, $\Vol(K\setminus S) < \Vol(K\setminus T)$ for sufficiently large $d$.
\end{proof}

Nevertheless, it is possible that for $d=2$, the largest sum-free subset is of the form \eqref{eq:sum-free-form}.

\begin{prob}
Let $K\subset \RR^2$ be a bounded convex set. Is it true that the largest sum-free subset of $K$ is of the form $\setcond{x\in K}{L(x)\in T}$ for some linear map $L:\RR^2\to \RR$ and sum-free set $T\subset \RR$?
\end{prob}

Other natural directions for further research are the stability and counting versions of our main result. In one dimension this was the classical Cameron-Erd\H{o}s  \cite{cameron1990number} conjecture, solved independently by Green \cite{green2004cameron} and by Sapozhenko \cite{sapozhenko2008cameron}. In two dimensions a corresponding result was recently obtained by Ghosal \cite{ghosal2025number}. We believe that our methods can be adapted and combined with the framework of \cite{ghosal2025number} to prove corresponding results in higher dimensions, but for the sake of brevity we leave this for future research.

\bibliographystyle{plain}
\bibliography{main}

\newpage
\appendix

\section{Numerical verification for small \texorpdfstring{$d$}{d}} \label{sec:numerical}

In this section, we discuss the details of the verification of Lemma~\ref{lem:real-layer-estimate} for $3\leq d\leq 200$. First, this can be directly verified for $d=3$, noting that $f_{d-1}(2u_d^*)=0$ in this case. For $k\in \NN$ and $t\in [0,1)$, we compute $f_d(k+t)$ by the following recursive formula (see \cite{J13}), which avoids catastrophic cancellation:
\begin{align*}
    f_d(k+t) &= \frac{1}{d-1}\paren{(k+t)f_{d-1}(k+t) + (d-k+1-t)f_{d-1}(k-1+t)},
\end{align*}
where $f_d(k+t)=0$ for $k\not\in \set{0,1,\ldots,d-1}$ and $f_1(t)=1$. Since the function $u\mapsto f_d(u)-2f_d(2u)$ is increasing for $d/4\leq u\leq d/2$, we can compute an arbitrarily small interval containing the unique root $u_d^*$ by binary search. Using interval arithmetic, we compute (intervals containing) the quantities
\begin{align*}
    A &:= \frac{f_{d-1}(2u_d^*-1)}{f_{d-1}(u_d^*-1)}, \\
    B &:= \frac{f_{d-1}(u_d^*)}{2f_{d-1}(u_d^*-1)+4f_{d-1}(2u_d^*)},
\end{align*}
and verify that $A\geq 2.01$ and $B\geq 1/0.99$. In fact, our computations show that $A>4$ and $B>2$ for all $4\leq d\leq 200$. The table below shows the values of $A,B$ and $u_d^*$ for select values of $d$.
\begin{center}
\begin{tabular}{c|c|c|c}
    $d$ & $u_d^*$ & $A$ & $B$ \\\hline
      4 & 1.448321 & 5.897488 & 3.360936\\
      5 & 1.780682 & 5.548431 & 2.903447\\
      6 & 2.112678 & 5.299078 & 2.721799\\
      7 & 2.445236 & 5.110731 & 2.606439\\
      8 & 2.778030 & 4.985979 & 2.529258\\
      9 & 3.110931 & 4.894917 & 2.474296\\
     10 & 3.443917 & 4.824602 & 2.433083\\
     20 & 6.775701 & 4.536771 & 2.272687\\
     50 & 16.774782 & 4.383697 & 2.192468\\
    100 & 33.441145 & 4.335525 & 2.167912\\
    150 & 50.107710 & 4.319765 & 2.159948\\
    200 & 66.774327 & 4.311939 & 2.156006
\end{tabular}
\end{center}

Python code implementing the verification can be downloaded at

\hspace{1cm} \url{https://people.maths.ox.ac.uk/keevash/papers/sumfree_verify.py}

It takes about 25 seconds to run on a Macbook Air M4.

\end{document}